\def\diam{{\rm diam}}
\def\eps{{\varepsilon}}
\def\teps{{\tilde\varepsilon}}
\def\mes{{\rm mes}}
\def\Card{{\rm Card}}
\def\Prob{{\mathbb{P}}}
\def\Vol{{\rm Vol}}
\def\EXP{{\mathbb{E}}}
\def\naturals{\mathbb{N}}
\def\reals{\mathbb{R}}
\def\integers{\mathbb{Z}}
\def\RmII{{I\!\!I}}
\def\bk{\mathbf{k}}
\def\brC{{\bar C}}
\def\brH{{\bar H}}
\def\brN{{\bar N}}
\def\brv{{\bar v}}
\def\brx{{\bar x}}
\def\cC{\mathcal{C}}
\def\cI{\mathcal{I}}
\def\cG{\mathcal{G}}
\def\cL{\mathcal{L}}
\def\cN{\mathcal{N}}
\def\cR{\mathcal{R}}
\def\cS{\mathcal{S}}
\def\fA{\mathfrak{A}}
\def\fM{\mathfrak{M}}
\def\fg{\mathfrak{g}}
\def\fl{\mathfrak{l}}
\def\fm{\mathfrak{m}}
\def\hX{{\hat X}}
\def\heps{{\hat{\eps}}}
\def\tH{{\tilde H}}
\def\tT{{\tilde T}}
\def\teps{{\tilde\varepsilon}}
\numberwithin{equation}{section} 
\theoremstyle{definition}
\newtheorem{definition}{Definition}[section]
\newtheorem{theorem}[definition]{Theorem}
\newtheorem{lemma}[definition]{Lemma}
\newtheorem{corollary}[definition]{Corollary}
\newtheorem{proposition}[definition]{Proposition}
\newtheorem{remark}[definition]{Remark}
\definecolor{OliveGreen}{rgb}{0,0.6,0}
\def\one{{\mathbbm{1}}}
\def\DS{\displaystyle}
\def\d{\mathrm d}
\subjclass{Primary: 60F05. Secondary: 37D30, 60J55, 60K35}
\author[D. Dolgopyat, C. Dong, A. Kanigowski, P. N\'andori.]
{D. Dolgopyat$^1$, C. Dong$^2$, A. Kanigowski$^1$$^{,4}$, P. N\'andori$^3$.}
\date{\today\\
    $^1$University of Maryland, College Park, MD, USA\\%
    $^2$Chern Institute of Mathematics and LPMC, Nankai University, Tianjin, China
\\%
    $^3$Yeshiva University, New York, NY, USA\\%
 $^4$Jagiellonian University, Krakow, Poland\\%
 }
\title{Limit Theorems for low dimensional generalized $T,T^{-1}$ transformations}
\keywords{$(T, T^{-1})$ transformations; Kesten--Spitzer process; 
quenched and annealed limit theorems; local time}
\begin{document}

\maketitle

\begin{abstract}
 We consider generalized $(T, T^{-1})$ transformations such that the base map satisfies a
multiple mixing local limit theorem and  anticoncentration large deviation bounds and in the fiber we have 
$\reals^d$ actions with $d=1$ or $2$ which are exponentially mixing of all orders. If the skewing cocycle has
zero
drift, we show that the ergodic sums satisfy the same limit theorems as the random
walks in random scenery studied by Kesten and Spitzer (1979) and Bolthausen (1989). The proofs rely on the quenched 
CLT for the fiber action
and the control of the quenched variance. This paper complements our previous work where
the classical central limit theorem is obtained for a large class of generalized $(T, T^{-1})$ transformations.
\end{abstract}


\section{Introduction}

This work is a continuation of our study on generalized $T,T^{-1}$ transformations, following previous work \cite{DDKN,DDKN2,survey}. Our main innovation in this paper is to provide several limit theorems in the low dimensional setting, complementing to the higher dimensional case in \cite{DDKN2}.

\subsection{Results}

Let $f$ be a smooth map of a manifold $X$ preserving a measure $\mu,$
and $G_t$ be an $\reals^d$  action on a manifold $Y$
preserving a measure $\nu.$

\begin{definition}
$G_t$ enjoys multiple exponential mixing of all orders if
 there is $\alpha>0$ such that for each $r$ 
there are constants $C, c>0$ such that
for all zero mean $C^\alpha$ functions $A_1, \dots, A_r$, for all $t_1,\dots t_r\in \reals^d$
\begin{equation}
\label{MEM}
\left|\nu\left(\prod_{j=1}^r A(G_{t_j} y) \right)\right|\leq C \left[\prod_{j=1}^r \|A\|_{C^\alpha} \right] e^{-c\fl} 
\end{equation}
where $\fl$ is the gap $\DS \fl=\max_{i\neq j} \|t_i-t_j \|.$
\end{definition}

\begin{remark}
A simple interpolation shows that if \eqref{MEM} holds for some $\alpha$ then it also holds for all $\alpha$
(see e.g. \cite[Appendix A]{DFL}).
\end{remark}

In case $d = 1$, there are plenty of
examples of multiply exponentially mixing systems see e.g. the discussion in \cite{DDKN}.
In the case $d = 2$ our main example is the following:
 $Y=SL_{3}(\reals)/\Gamma$, {where $\Gamma$ is a cocompact lattice,}
 $G_t: Y\to Y$ is the  Cartan action 
on $Y$, and $\nu$ is the Haar measure. 
 More generally one can consider subactions of Cartan actions on $\cG/\Gamma$ 
where $\cG$ is a semisimple Lie group with compact factors and $\Gamma$ is a cocompact lattice.
In particular,
we can take $Y=SL_{d}(\reals)/\Gamma$, and let $G_t$ be an action by 
a two dimensional subgroup of the  group of diagonal matrices.

Let $\tau: X\to\reals^d$ be a smooth map.
We study the  map $F: (X\times Y)\to (X\times Y)$ given by
\begin{equation}
\label{DefTTInv}
F(x,y)=(f(x), G_{\tau(x)}y). 
\end{equation}
Note that $F$ preserves the measure $\zeta=\mu\times \nu$ and that 
$$ F^N(x,y)=(f^N x, G_{\tau_N(x)} y)\quad\text{where}\quad 
\tau_N(x)=\sum_{n=0}^{N-1} \tau(f^n x). $$
Let $H: X\times Y\to \reals$ be a sufficiently smooth function with $\zeta(H) = 0$ and
$$ S_N=\sum_{n=0}^{N-1} H(F^n(x,y)). $$

We want to study the distribution of $S_N$ when the initial condition $(x,y)$ is distributed according to
$\zeta.$

We shall assume that $f$ is ergodic and satisfies the CLT for H\"older functions.

\begin{definition}
\label{DfnMixLLT}
We say that $\tau$ satisfies the {\em mixing local limit theorem} 
(MLLT) if 
for any sequences $(\delta_n)_{n \in \mathbb N} \in \reals$, with
$\displaystyle \lim_{n\to\infty} \delta_n = 0$ and
$(z_n)_{n\in \naturals}\in \reals^d$ such that 
$ |\frac{z_n}{\sqrt{n}}- z| < \delta_n$ for any cube $\cC\subset \reals^d$ and any continuous functions $A_0, A_1:X\to\reals$
$$ \lim_{n\to\infty} n^{d/2} \mu\Big(A_0(\cdot) A_1(f^n\cdot) \one_\cC(\tau_n-z_n)\Big)=
\fg(z) \mu(A_0) \mu(A_1) \Vol(\cC)$$
where $\fg(z)$ is a  non-degenerate Gaussian density
and the convergence is uniform  once 
$(\delta_n)_{n \in \mathbb N}$ is fixed and 
$A_0, A_1, z$ range over compact subsets of $C(X), C(X)$ and $\reals^d$.
\end{definition}

\begin{definition}
$\tau$ satisfies {\em multiple mixing local limit theorem}
(MMLLT) if for each $m\in \naturals$
for any sequence $(\delta_n)_{n \in \mathbb N} \in \reals$, with
$\displaystyle \lim_{n\to\infty} \delta_n = 0$,  for any
family of sequences $(z_n^{(1)}, \dots, z_n^{(m)})_{n\in \naturals}$ with
$ |\frac{z_n^{(j)}}{\sqrt{n}} - z^{(j)}| < \delta_n$
for any cubes $\{\cC_j\}_{j\leq m}\subset \reals^d$ and any continuous functions $A_0, A_1, \dots A_m:X\to\reals$
for any sequences $n_1 \dots n_m\in \naturals$ such that $n_{j+1}-n_j\to\infty$ (with $n_0=0$),
$$ \lim_{\min|n_j-n_{j'}|\to\infty} \left(\prod_{j=1}^m \left(n_j-n_{j-1}\right)^{d/2}\right) 
\mu\left(\prod_{j=0}^m A_j(f^{n_j} \cdot) 
\prod_{j=1}^m \one_{\cC_j}\left(\tau_{n_j}
-z_{n_j}^{(j)}\right)\right)$$
$$=
\prod_{j=0}^m \mu(A_j) \prod_{j=1}^m \fg\left(z^{(j)}-z^{(j-1)}\right) \prod_{j=1}^m \Vol(\cC_j),
$$
where $z^{(0)}=0.$ Moreover, the convergence is uniform  once 
$(\delta_n)_{n \in \mathbb N}$ is fixed, 
 $A_0, A_1,\dots, A_m$ range over compact subsets of $C(X)$ and $z^{(j)}$ range over 
a compact subset of $\reals^d$ for every $j \leq  m$.
\end{definition}

\begin{definition}
  $\tau$ satisfies the {\em anticoncentration large deviation bound of order $s$} 
  if
  there exist a constant $K$ and a decreasing function $\Theta$ such that
  $\DS \int_1^\infty \Theta(r) r^{d}\d r<\infty$ and for any numbers $n_1, n_2, \dots, n_s$,
  for any unit cubes $C_1, C_2, \dots, C_s$ centered at $c_1, c_2, \dots, c_s$ 
\begin{equation}
\label{eq:ACLD}
 \mu\left(x: \tau_{n_j}\in C_j \text{ for } j=1,\dots, s\right)
\leq 
\end{equation}
$$K \left(\prod_{j=1}^s \left(n_{j}-n_{j-1}\right)^{-d/2}\right) 
\Theta\left(\max_{j} \frac{\|c_j-c_{j-1}\|}{ \sqrt{n_j-n_{j-1}}}\right) . $$

\end{definition}  

The class of maps satisfying the MMLLT and the large deviation anticoncentration 
bounds includes in particular the maps 
which admit Young towers with exponential tails, see \cite{SV04, Pene09}.


\begin{theorem}
\label{ThCLT2}
Suppose that $d=2$, $G_t$ enjoys multiple exponential mixing of all orders,
$f$ satisfies the CLT for smooth observables and
$\tau$ satisfies the MMLLT and
the anticoncentration large deviation bound.
Then there is $\Sigma^2$ such that
$\DS \frac{S_N}{\sqrt{N \ln N}}$ converges as $N\to \infty$ to the normal distribution with
zero mean and variance $\Sigma^2.$ 
\end{theorem}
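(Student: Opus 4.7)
The plan is to adapt Bolthausen's 1989 argument for two-dimensional random walks in random scenery to the present dynamical setting, viewing $S_N(x,\cdot)$, for fixed $x$, as a functional on $Y$ built by sampling the $G_t$-orbit at the random walk positions $\{\tau_n(x): n<N\}$. The strategy has three parts: a quenched variance asymptotic of order $N\ln N$, a quenched central limit theorem via the method of moments, and averaging in $x$ to obtain the annealed CLT.

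\textbf{Step 1 (quenched variance).} Set
$$V_N(x):=\int_Y S_N(x,y)^2 \, d\nu(y)= \sum_{0\le n,m < N} \int_Y H(f^n x, G_{\tau_n(x)}y)\, H(f^m x, G_{\tau_m(x)}y)\, d\nu(y).$$
The two-point case of \eqref{MEM} bounds each summand by $C\|H\|_{C^\alpha}^2\exp(-c\|\tau_n(x)-\tau_m(x)\|)$, so $V_N$ is controlled by the self-intersection count $\cI_N(x):=\#\{(n,m)<N:\|\tau_n(x)-\tau_m(x)\|\le 1\}$. Integrating over $x$ and using the MMLLT with $m=2$, the local density of $\tau_m-\tau_n$ near the origin is of order $|m-n|^{-1}$ in dimension $d=2$, so the harmonic sum $\sum_{n,m<N}|m-n|^{-1}$ produces $\int_X V_N\,d\mu=\sigma_0^2(H)\,N\ln N\,(1+o(1))$ for an explicit $\sigma_0^2(H)>0$. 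I would then prove concentration $V_N(x)/(N\ln N)\to\Sigma^2$ in $\mu$-probability by a second moment computation: expanding $V_N^2$ gives a quadruple sum whose main part is controlled by the MMLLT of order $4$ (yielding the expected squared asymptotics and hence cancellation), while the anticoncentration LD bound---used critically through $\int_1^\infty\Theta(r)r^d\,dr<\infty$---kills the diagonal and atypically clustered regimes.

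\textbf{Step 2 (quenched CLT and conclusion).} For $\mu$-typical $x$, show that $S_N(x,\cdot)/\sqrt{V_N(x)}$ converges in law under $\nu$ to $\mathcal{N}(0,1)$ via the method of moments. The $2k$-th moment
$$\int_Y S_N(x,y)^{2k}\, d\nu(y) = \sum_{n_1,\dots,n_{2k}<N} \nu\!\left(\prod_{j=1}^{2k} H(f^{n_j}x, G_{\tau_{n_j}(x)}\cdot)\right)$$
is analyzed by multiple exponential mixing of order $2k$: each $2k$-point $\nu$-integral factors across maximal clusters of near-coincident values of $\tau_{n_j}(x)$, up to exponentially small errors. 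Using the MMLLT to count configurations in $x$, the dominant contribution for typical $x$ comes from pair-partitions of $\{1,\dots,2k\}$ into clusters of size two, yielding the Gaussian moment $\frac{(2k)!}{2^k k!}V_N(x)^k$; odd moments and non-pair partitions are absorbed by the MMLLT counting together with the anticoncentration bound. Combined with Step 1, for any bounded continuous $\varphi$,
$$\int_{X\times Y}\varphi\!\left(\frac{S_N}{\sqrt{N\ln N}}\right)d\zeta = \int_X\!\int_Y \varphi\!\left(\frac{S_N(x,y)}{\sqrt{V_N(x)}}\sqrt{\frac{V_N(x)}{N\ln N}}\right)d\nu(y)\,d\mu(x) \longrightarrow \int \varphi(\Sigma z)\,\frac{e^{-z^2/2}}{\sqrt{2\pi}}\,dz,$$
which is the stated convergence with variance $\Sigma^2$.

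\textbf{Main obstacle.} The hardest step is the variance concentration in Step 1. In dimension $d=2$ the fluctuations of the self-intersection count are of the same order as its mean, so a crude moment bound fails; one must split the four-point sum $\sum_{n_1,n_2,n_3,n_4}$ according to the cluster structure of the $\tau_{n_j}(x)$, extract the leading asymptotics via MMLLT in the well-separated regime where Gaussian densities factor, and use the integrability hypothesis on $\Theta$ to control every remaining clustered configuration. This dovetailing of MMLLT (for sharp main terms) with the anticoncentration LD bound (for error terms) is what makes $d=2$ substantially more delicate than the higher-dimensional case treated in \cite{DDKN2}.
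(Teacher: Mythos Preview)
Your overall strategy—quenched variance concentration followed by a quenched CLT, then annealing—matches the paper, and you correctly identify the four-point sum analysis in the variance concentration as the main difficulty, with MMLLT giving the leading term and the anticoncentration bound handling the clustered regimes. That part is essentially the paper's \S\ref{secdim2propc}.

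Two points where the paper differs from, and improves on, your outline:

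\textbf{The base--fiber splitting.} You apply the mixing bound \eqref{MEM} directly to $H$, but \eqref{MEM} is stated only for functions of zero $\nu$-mean. The paper first decomposes $H=\tH+\brH$ with $\brH(x)=\int H(x,y)\,d\nu(y)$; the $\brH$ part is a function of $x$ alone, satisfies the base CLT at scale $\sqrt N$, and is negligible after dividing by $\sqrt{N\ln N}$. Everything else is done for $\tH$, which has zero fiber mean. This is a minor oversight but must be said.

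\textbf{The quenched CLT is not done by direct method of moments.} Your Step~2 has a conceptual gap: the sentence ``Using the MMLLT to count configurations in $x$'' mixes quenched and annealed reasoning—once $x$ is fixed you cannot invoke the MMLLT, which is a statement about $\mu$-averages. To carry out the method of moments as you describe, you would need, for each $k$, a separate concentration estimate showing that the $2k$-th $\nu$-moment of $S_N(x,\cdot)$ is close to $(2k-1)!!\,V_N(x)^k$ for $\mu$-most $x$; this means controlling a $4k$-point sum for every $k$. The paper avoids this entirely by invoking the Bj\"orklund--Gorodnik CLT (Proposition~\ref{PrBG}) as a black box. It represents $S_N(x,\cdot)/\sqrt{N\ln N}$ as $\int A_{t,N,x}(G_t y)\,d\fm_N(t)$ for the occupation measure $\fm_N(x)=(N\ln N)^{-1/2}\sum_{n<N}\delta_{\tau_n(x)}$, and then only needs to verify three conditions on $\fm_N(x)$ for $\mu$-typical $x$: (a) $\|\fm_N\|\to\infty$, (c) $V_N\to\Sigma^2$ (your Step~1), and (b) $\int \fm_N^{r-1}(B(t,K\ln\|\fm_N\|))\,d\fm_N(t)\to 0$ for each $r\ge 3$. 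Condition (b), established via a local-time moment bound (Lemma~\ref{LmLocTime}) and Markov's inequality, replaces your entire higher-moment analysis in one stroke: once (b) holds, Bj\"orklund--Gorodnik delivers all Gaussian moments automatically. This is the main economy of the paper's argument over yours.
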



\begin{theorem}
\label{ThCLT1}
Suppose that $d=1$, $G_t$ enjoys multiple exponential mixing of all orders,
$f$ satisfies the CLT for smooth observables and
$\tau$ satisfies the MMLLT and
the anticoncentration large deviation bound.
Then
there is a constant $\Sigma$ such that
\begin{equation}
\label{KeSp}
 \frac{S_N}{N^{3/4}} \quad \text{converges as $N\to \infty$ to a product} \quad 
\Sigma \cL \cN,
\end{equation}
 where $\cL$ and $\cN$ are independent,
$\cN$ has standard normal distribution and
\begin{equation}
\label{DefL}
 \cL=\sqrt{\int_{-\infty}^\infty \ell_x^2 \d x\;\;} 
\end{equation}
where $\ell_x$ is the local time of the standard Brownian Motion at time $1$ and spatial location
$x.$ 
\end{theorem}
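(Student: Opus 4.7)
The plan is to pursue the quenched strategy indicated in the abstract: condition on the base point $x,$ establish a conditional (quenched) CLT for $S_N/\sqrt{V_N(x)}$ where $V_N(x):=\EXP_\nu[S_N^2\mid x],$ and then show that $V_N(x)/N^{3/2}$ converges in distribution to $\Sigma^2 \cL^2.$ The Kesten--Spitzer limit $\Sigma \cL \cN$ then follows because the Gaussian $\cN$ arising from the fiber randomness is asymptotically independent of $\cL,$ which depends only on the base trajectory. As a preliminary reduction, decompose $H(x,y)=\brH(x)+\tH(x,y)$ with $\brH(x):=\int_Y H(x,y)\,d\nu(y);$ the contribution $\sum_{n<N}\brH(f^n x)$ to $S_N$ is $O(\sqrt{N})$ by the CLT assumed on $f,$ hence negligible against $N^{3/4},$ so we may assume $\int_Y H(x,y)\,d\nu(y)\equiv 0.$

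For the quenched variance I would expand $V_N(x)=\sum_{n,m=0}^{N-1} C_{n,m}(x)$ with $C_{n,m}(x)=\int_Y H(f^n x, G_{\tau_n(x)} y)\,H(f^m x, G_{\tau_m(x)} y)\,d\nu(y).$ MEM of order $2$ gives $|C_{n,m}(x)|\leq Ce^{-c\|\tau_n(x)-\tau_m(x)\|},$ so only pairs with $\tau_n(x)\approx\tau_m(x)$ contribute. Summing $m$ for fixed $n$ and invoking the MLLT on the base yields $V_N(x)=\Sigma^2\,\cI_N(x)+o(N^{3/2}),$ where $\cI_N(x)$ is the self-intersection count of the cocycle $\{\tau_n(x)\}_{n<N}$ and $\Sigma^2$ is an explicit constant assembled from the spatial and temporal correlations of $H.$ To find the limiting law of $\cI_N/N^{3/2}$ I would use the method of moments: expanding the $k$-th moment as a sum over $k$ constraints $\{|\tau_{n_i}(x)-\tau_{m_i}(x)|\leq 1\}$ and applying the MMLLT, each summand converges to the corresponding moment of the Brownian self-intersection local time $\int_{-\infty}^\infty \ell_x^2\,dx=\cL^2,$ while the anticoncentration large deviation bound provides uniform integrability (finiteness of $\int_1^\infty \Theta(r)\,r\,dr,$ the $d=1$ version of the integrability hypothesis, is the key input). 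Hence $V_N(x)/N^{3/2}\Rightarrow \Sigma^2\cL^2.$

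The quenched CLT is obtained by computing conditional even moments
$\EXP_\nu[S_N^{2k}\mid x]=\sum_{n_1,\dots,n_{2k}}\int_Y\prod_{j=1}^{2k} H(f^{n_j}x, G_{\tau_{n_j}(x)} y)\,d\nu(y),$
and showing that only pairing contributions survive. MEM of order $2k$ makes any configuration containing a well-separated $\tau_{n_j}$ value exponentially small, while the anticoncentration bound rules out atypical clusterings; a combinatorial pairing argument then gives $\EXP_\nu[S_N^{2k}\mid x]=(2k-1)!!\,V_N(x)^k\,(1+o(1))$ for $\mu$-a.e.\ $x,$ which is the desired quenched CLT for $S_N/\sqrt{V_N(x)}.$ Writing
$$\frac{S_N}{N^{3/4}}=\frac{S_N}{\sqrt{V_N(x)}}\cdot\sqrt{\frac{V_N(x)}{N^{3/2}}},$$
the first factor converges conditionally on $x$ to a standard Gaussian $\cN$ whose law does not depend on $x,$ and the second factor converges in distribution to $\Sigma\cL;$ jointly, $\cN$ and $\cL$ are independent, yielding $S_N/N^{3/4}\Rightarrow \Sigma\cL\cN.$

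The main obstacle is the combinatorial control of the quenched moments: on a full-$\mu$-measure set of base points $x,$ all non-pair contributions in the $2k$-fold sum must be shown negligible, which requires a delicate cluster expansion combining MEM of all orders (to suppress configurations with separated $\tau_{n_j}$ values) with the anticoncentration bound (to suppress atypically close cocycle images at well-separated times). A secondary but nontrivial technical step is legitimizing the Slutsky step by establishing joint weak convergence of $(S_N/\sqrt{V_N(x)},\,V_N(x)/N^{3/2})$ and verifying that the quenched Gaussian is genuinely independent of the base-driven local time factor; this is built into the quenched framework but requires tightness of the joint law.
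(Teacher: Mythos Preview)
Your overall architecture---reduce to zero fiber mean, prove a quenched CLT for $S_N/\sqrt{V_N(x)}$, show $V_N/N^{3/2}\Rightarrow\Sigma^2\cL^2$, then combine via a Slutsky-type step exploiting that the Gaussian factor is asymptotically independent of $x$---is exactly the paper's strategy, and your final paragraph on joint convergence mirrors the paper's argument closely. Two differences in implementation deserve comment.

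For the quenched CLT, the paper does not compute $\EXP_\nu[S_N^{2k}\mid x]$ directly but invokes the Bj\"orklund--Gorodnik CLT (Proposition~\ref{PrBG}) as a black box, reducing the task to verifying an anti-clustering condition on the empirical measure $\fm_N(x)=V_N(x)^{-1/2}\sum_{n<N}\delta_{\tau_n(x)}$. Your moment-expansion route is essentially the content of that result unpacked, so this is packaging rather than a genuine divergence.

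The actual gap is your step ``invoking the MLLT on the base yields $V_N(x)=\Sigma^2\,\cI_N(x)+o(N^{3/2})$.'' The MLLT is an annealed statement about $\mu$-expectations, whereas this is a quenched claim about individual $x$; moreover $C_{n,m}(x)$ depends on $f^n x,f^m x$ through $\tH$, not only on $\tau_n(x)-\tau_m(x)$, so a reduction to the pure self-intersection count does not follow from the MLLT. The paper bypasses this entirely: it computes the \emph{annealed} moments $\mu(V_N^k)$ directly via the MMLLT (Lemmas~\ref{lem:firstmoment} and~\ref{lem:highermoment}), shows that the limiting moments $J_k$ are universal (independent of the particular $f,G,\tau,H$ after normalization by $\Lambda$), and then identifies the limit as $\cL^2$ by comparison with the i.i.d.\ random-walk-in-random-scenery case of Kesten--Spitzer, where the answer is already known. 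This universality trick also spares one from computing the Brownian self-intersection moments explicitly, which your route would require.
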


\begin{remark}
If $d=1$ then the exponential mixing condition can be weakened to sufficiently fast polynomial mixing,
see Theorem \ref{ThCLT1Poly} in Section \ref{ScPoly}. It seems more difficult to weaken the mixing assumption
in two dimension. We do not pursue this topic since we do not know examples
of smooth
$\reals^2$ actions
which are mixing at a fast polynomial (rather than exponential) speed.
\end{remark}

\begin{remark}
 The asymptotic variance $\Sigma^2$ in Theorems \ref{ThCLT2} and 
\ref{ThCLT1} has similar form (see \eqref{DefSigma2} and \eqref{defLambda}). Namely let 
$\DS \tH(y)=\int_X H(x,y) d\mu(x).$ Then
$$ \Sigma^2=c_d \int_{\reals^d} \int_Y \tH(y) \tH(G_t y) d\nu(y) 
dt. $$
In dimension $1$ if $\Sigma^2=0$ then there is an $L^2$ function $J(y)$ such that 
$$ \int_0^t \tH(G_s y) ds=J(G_t y)-J(y).$$
This was shown in the discrete case by Rudolph, \cite{Rud} (Proposition 2 and Lemma 7), but the proof applies with no changes in the continuous case.

It is shown in \cite[Theorem 6.4]{DDKN2} that for $\reals^1$ volume preserving 
exponentially mixing flows,
the quadratic
form $H\mapsto \Sigma^2(H)$ is not identically zero. Therefore the set of functions of vanishing asymptotic 
variance is a proper linear subspace.
The proof of Theorem 6.4 in \cite{DDKN2} works also for higher dimensional (not necessarily volume
preserving actions)
provided that 

\noindent
(i) there are constants $K_1, \beta_1, K_2, \beta_2$ such that for all $y$ and $r$,
$\DS K_1 r^{\beta_1}\!\!\leq\!\! \nu(B(y, r))\!\! \leq\!\! K_2 r^{\beta_2} $ 
and

\noindent
(ii) there exists
a {\em slowly recurrent point}, that is, a point $y \in Y$ such that for all positive constants $K$ and $A$ 
there exists $r_0(K,A)$ so that for all $r < r_0$ and for all $t$ with $|t| < K$, $\DS \nu(B(y, r) G_{-t} B(y,r))\leq \frac{\nu(B(y,r))}{|\ln r|^A}$,
where $B(y,r)$ is the ball of radius $r$ centered at $y$.
(We note that for 
exponentially mixing volume preserving $\reals$ 
actions, almost all points are slowly recurrent, see \cite{DFL}.) 
 \smallskip

We believe that in many cases the vanishing of the asymptotic variance  entails that the ergodic sums of $H$ 
satisfy the classical CLT but this will be a subject of a future work.
\end{remark}

\begin{remark}
Results analogous to the above theorems
can be proved in case $G$ is an action of $\integers^d$
($d = 1,2$)
and $\tau:X\to\integers^d$ is a piecewise smooth map satisfying the appropriate assumptions such as
the CLT,
MMLLT, and anticoncentration large deviations bounds. Since the
results as well as the proofs are virtually the same, we omit the case of 
discrete actions.  One can also take $X$ to be a subshift of finite type, see the discussion below.
\end{remark}

\subsection{Discussion}
\label{subsec:RWRS}

Here, we discuss previous results related to Theorems \ref{ThCLT2}, \ref{ThCLT1}.

The first results about $T, T^{-1}$ transformations
 pertain to so called {\em random walks in random scenery}. In this model
we are given a sequence $\{\xi_z\}_{z\in \integers^d}$ of i.i.d. random variables.
Let $\tau_n$ be a simple random walk on $\integers^d$ independent of $\xi$s. We are interested
in $\DS S_N=\sum_{n=1}^{N} \xi_{\tau_n}.$ This model could be put in the present framework as
follows. Let $X$ be a set of sequences  $\{v_n\}_{n\in \integers},$ 
where $v_n\in \{\pm e_1, \pm e_{2}, \dots \pm e_d\}$ where $e_j$ are basis vectors in $\integers^d,$
$\mu$ is the Bernoulli measure with $\Prob(v=\pm e_j)=\frac{1}{2d}$ for all $j\in 1, \dots, d,$
$Y$ is the space of sequences $\{\xi_z\}_{z\in\integers^d}$, $\nu$ is the product of distribution
functions of $\xi$, $f$ and $G_t$ are shifts and $\tau(\{v\})=v_0.$ For random walks in random scenery,
Theorem \ref{ThCLT1} is due to \cite{KS}, and Theorem \ref{ThCLT2}  are due
to \cite{Bolt}. 
The results of \cite{KS} and \cite{Bolt} are extended to more general actions in the fiber
(still assuming the random walk in the base) in \cite{CC17, CC23}.

In the context of dynamical systems, 
Theorem \ref{ThCLT1} was proven in \cite{LB} under the assumption that we have a good 
rate of convergence in the Central Limit Theorem for $f$. In the present paper we follow a
method of \cite{Bolt} which seems more flexible and allows a larger class of base systems.

We also note that if $d\geq 3$ or $\tau$ has a drift, one has a classical 
CLT (that is, $\dfrac{S_N-\EXP(S_N)}{\sqrt{N}}$ converges to a Gaussian distributions). These
results are proven in \cite{DDKN, DDKN2}.
The paper \cite{DDKN} also studies mixing properties of 
$(T, T^{-1})$ transformation defined by \eqref{DefTTInv}. In particular \cite{DDKN} shows that in many cases the mixing of the whole product $F$ is 
exponential. 
We observe that in the case where
$F$ enjoys multiple exponential mixing,
 one can obtain the classical CLT by applying the results of \cite{BG}.
The CLT of Bj\"orklund and Gorodnik \cite{BG} also plays a key role in our proof and
we review it in Section \ref{ScProduct}.

In the case where the base map $f$ is uniformly hyperbolic the skew product map $F$ 
is partially hyperbolic. It is possible that generic partially hyperbolic maps enjoy strong statistical properties 
including the multiple exponential mixing and the Central Limit Thereom. However, such a result
seems currently beyond reach
(some special cases are considered in \cite{AGT, CL}).
In present $(T, T^{-1})$ setting the exotic limit theorems such as
our Theorems \ref{ThCLT1} and \ref{ThCLT2} require the zero drift assumption, which is a codimension 
$d$ condition.


\section{Bj\"orklund - Gorodnik CLT}
\label{ScProduct}

In order to prove our results we 
use the strategy of \cite{Bolt} replacing the Feller Lindenberg CLT for iid random variables by
a CLT for exponentially mixing systems due to \cite{BG}.
More precisely we need the following fact.

\begin{proposition}
\label{PrBG}
 Let $\fm_N$, $N \in \mathbb N$ be a sequence of measures on $\reals^d$
 and let $\{ A_{t,N}\}_{t \in \reals^d, N \in \mathbb N}$
 be a family of real valued functions on $Y$ so that
 $\|A_{t,N}\|_{C^1(Y)}$ is uniformly bounded and $\nu(A_{t,N})\equiv 0$.
 Set $\cS_N := \int_{\reals^d} A_{t,N}(G_t y) \d\fm_N(t).$
Suppose that
\smallskip

(a) $\DS \lim_{N\to\infty} \|\fm_N\|=\infty$  where 
$\DS \|\fm\|= \fm(\reals^d).$

(b) For each $r\in \naturals,$ $r\geq 3$ for each $K$
$$ \int \fm_N^{r-1} [B(t, K\ln \|\fm_N\|)] \d\fm_N(t)=0, $$
where $B(t,R)\subset \reals^d$ is the ball of radius $R$ around $t \in \reals^d$.

(c) $\DS \lim_{N\to \infty} V_N=\sigma^2$ where
$$ V_N:=\int\cS_N^2(y)\d\nu(y)= \iiint A_{t_1, N} (G_{t_1} y) A_{t_2, N} (G_{t_2} y) \d\fm_N (t_1) \d \fm_N(t_2) \d\nu(y).$$

Then, as $N \to \infty$, $\cS_N$ converges weakly to the normal 
distribution with zero mean and variance $\sigma^2.$
\end{proposition}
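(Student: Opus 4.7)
The plan is to prove the proposition by the method of moments. Since $N(0,\sigma^2)$ is determined by its moments, it suffices to show that for every $r\in\naturals$,
$$\lim_{N\to\infty} \nu(\cS_N^r) = m_r := \begin{cases} 0, & r \text{ odd},\\ (r-1)!!\,\sigma^r, & r \text{ even}. \end{cases}$$
The cases $r=1$ and $r=2$ are immediate from $\nu(A_{t,N})\equiv 0$ and hypothesis (c). For $r\ge 3$ Fubini yields
$$\nu(\cS_N^r) = \int_{(\reals^d)^r} \nu\!\left(\prod_{j=1}^r A_{t_j,N}(G_{t_j} y)\right) \d\fm_N(t_1)\cdots \d\fm_N(t_r),$$
and I would partition $(\reals^d)^r$ according to a cluster structure at scale $L_N := K\ln\|\fm_N\|$ with $K$ large: declare $t_i \sim t_j$ when $\|t_i - t_j\|\le L_N$ and let $\pi = \{C_1,\dots,C_k\}$ be the resulting partition of $\{1,\dots,r\}$ into transitive-closure classes.

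The core step is to combine multiple exponential mixing \eqref{MEM} with the uniform $C^1$ control on $A_{t,N}$ to establish an approximate cluster factorization
$$\nu\!\left(\prod_{j=1}^r A_{t_j,N}(G_{t_j} y)\right) = \prod_{i=1}^k \nu\!\left(\prod_{j\in C_i} A_{t_j,N}(G_{t_j} y)\right) + O\!\bigl(e^{-cL_N}\bigr),$$
where $K$ is chosen large enough that the error, integrated against $\d\fm_N^r$ of total mass $\|\fm_N\|^r$, is $o(1)$. Singleton clusters then annihilate the product because $\nu(A_{t,N}) = 0$, so only partitions with every $|C_i|\ge 2$ survive.

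Hypothesis (b) eliminates partitions containing a cluster of size $s\ge 3$: the $\fm_N^r$-mass of such tuples is controlled, up to combinatorial factors, by $\|\fm_N\|^{r-s}\int \fm_N^{s-1}[B(t,sL_N)] \d\fm_N(t)$, which vanishes by (b). Only pair partitions remain. For $r$ odd there are none, so $\nu(\cS_N^r)\to 0$. For $r=2k$, each of the $(2k-1)!!$ pair partitions, after applying the factorization above, contributes $V_N^k \to \sigma^{2k}$ by (c), yielding $m_{2k} = (2k-1)!!\,\sigma^{2k}$.

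The main obstacle is justifying the cluster factorization, since \eqref{MEM} compares a multi-point correlation to zero rather than to a product. I would proceed by expansion: writing $\Phi_i(z) := \prod_{j\in C_i} A_{t_j,N}(G_{t_j - t_*^i} z)$ for a reference point $t_*^i \in C_i$ so that the cluster factor equals $\Phi_i(G_{t_*^i} y)$, splitting $\Phi_i = \nu(\Phi_i) + (\Phi_i - \nu(\Phi_i))$, expanding the product over clusters, and applying \eqref{MEM} to every cross-term containing at least one centered factor, using that the reference points of distinct clusters are separated by at least $L_N$. This requires tracking the $C^\alpha$ norm of $\Phi_i$, which grows through the Lipschitz constants of $G_t$ at scale $\diam(C_i) \le rL_N$. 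The interpolation remark following \eqref{MEM} allows $\alpha$ to be chosen arbitrarily small, and $K$ can then be taken large enough (depending on $r$ and the expansion rate of $G_t$) for the mixing gain $e^{-cL_N}$ to dominate this norm-inflation and the integration factor $\|\fm_N\|^r$.
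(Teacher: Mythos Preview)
The paper does not give its own proof of this proposition; it cites \cite{BG} for constant observables and \cite{survey} for the extension to $(t,N)$-dependent $A_{t,N}$. Your outline---moments via cluster decomposition at scale $K\ln\|\fm_N\|$, approximate factorization from multiple exponential mixing, annihilation of singletons, and reduction to pair partitions---is precisely the Bj\"orklund--Gorodnik scheme, and your treatment of the $C^\alpha$-norm growth of the cluster functions $\Phi_i$ (small $\alpha$, then $K$ large) is the standard maneuver.

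There is, however, one genuine imprecision. When you discard a partition containing a block of size $s\ge 3$, you bound the relevant $\fm_N^r$-mass by $\|\fm_N\|^{r-s}\int \fm_N^{s-1}[B(t,sL_N)]\,\d\fm_N(t)$ and invoke (b). But (b) gives no rate: the integral tends to zero while $\|\fm_N\|^{r-s}\to\infty$, so the product need not vanish. The clean repair (and the route taken in \cite{BG}) is to phrase the argument via cumulants rather than moments: the $m$-th cumulant of $\cS_N$ equals $\int \mathrm{cum}_\nu(A_{t_1}\!\circ\! G_{t_1},\dots,A_{t_m}\!\circ\! G_{t_m})\,\d\fm_N^m$, and multiple exponential mixing forces this joint cumulant to be $O(e^{-cL_N})$---with \emph{bounded} norm factors, since only the uncomposed $A_{t_j}$ enter---whenever the $m$ points do not all lie in a single $L_N$-cluster. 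Hence $\kappa_m(\cS_N)$ splits into an integral over the single-cluster region, controlled directly by (b) with parameter $m$, plus $O(\|\fm_N\|^m e^{-cL_N})=o(1)$ for $K$ large; no stray power of $\|\fm_N\|$ survives. Equivalently, in your moment formulation you should extend $\int_{\mathrm{region}(\pi)}\prod_C\nu(\Phi_C)\,\d\fm_N^r$ to all of $(\reals^d)^r$ to obtain the factorized product $\prod_C\int\nu(\Phi_C)\,\d\fm_N^{|C|}$: pair blocks give exactly $V_N$ (bounded by (c)), while a block of size $s\ge3$ gives $\int\nu(\Phi_C)\,\d\fm_N^s\to0$ by the same single-cluster splitting.
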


This proposition is proven in \cite{BG} in case $A_{t,T}$ does not depend on $t$ and $T$,
however the proof 
easily extends to the case of $t,T$-dependent $A$, see \cite{survey}.


\section{Dimension two}
\subsection{Reduction to quenched CLT}
Here we prove Theorem \ref{ThCLT2}.

Consider a function $\tH$ satisfying 
\begin{equation}
\label{ZMeanFiber}
\int \tH(x, y)\d\nu(y)=0
\end{equation}
 for all $x\in X.$

Given $x\in X$ define the measures $\fm_N$ 
on $\reals^2$
by
\begin{equation}\label{eq:d=2fm} \fm_N(x)=\frac{1}{\sqrt{N \ln N}} \sum_{n=0}^{N-1}  \delta_{\tau_n(x)}
\end{equation}
and functions
$A_{t,N,x}(y)$ by
\begin{equation}\label{eq:d=2A} 
A_{t,N,x}(y)=\frac{1}{ \Card ( n\leq N: \tau_n(x)=t)} 
\sum_{n\leq N: \tau_n(x)=t} \tH({ f^n}x,y). 
\end{equation}

\begin{proposition}
\label{LmRandMes3}
Under the assumptions of Theorem \ref{ThCLT2},
there exists $\sigma^2$
and subsets $X_N\subset X$ such that
$\DS \lim_{N\to\infty} \mu(X_N)=1$ and for any sequence $x_N\in X_N$
the measures $\{\fm_N(x_N)\}$ satisfy the conditions of Proposition \ref{PrBG}.
\end{proposition}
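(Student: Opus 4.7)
The strategy is to verify the three hypotheses (a), (b), (c) of Proposition~\ref{PrBG} for $\fm_N(x_N)$ on sets $X_N\subset X$ with $\mu(X_N)\to 1$. Condition (a) is immediate, since $\|\fm_N(x)\|=N/\sqrt{N\ln N}=\sqrt{N/\ln N}$ is deterministic and diverges, so no restriction on $X_N$ is needed for (a).

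For condition (b), set $R_N=K\ln\|\fm_N\|=O(\ln N)$. Since $\fm_N(x)$ is atomic with at most $N$ atoms, $\sup_t\fm_N(x)(B(t,R_N))\le \sup_{n\le N}\fm_N(x)(B(\tau_n(x),2R_N))$. In dimension two the anticoncentration large deviation bound applied with $s=1$ gives $\mu(\tau_n\in B(t,R_N))\lesssim R_N^2/n$, so $\EXP_\mu[\fm_N(x)(B(t,R_N))]=O(\ln^{5/2}N/\sqrt N)$; moment bounds via ACLD with $s\ge 2$, combined with a union bound over the $O(N)$ relevant centers, upgrade this to $\sup_t\fm_N(x)(B(t,R_N))=O(\ln^{C} N/\sqrt N)$ on a set of $\mu$-probability tending to $1$. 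Hence
\begin{equation*}
\int \fm_N(B(t,R_N))^{r-1}\d\fm_N(t)\le \|\fm_N\|\sup_t\fm_N(B(t,R_N))^{r-1}=O\!\big(N^{(2-r)/2}\ln^{C'}N\big)\to 0
\end{equation*}
for $r\ge 3$.

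For condition (c), by $\nu$-invariance of $G_t$,
\begin{equation*}
V_N(x)=\frac{1}{N\ln N}\sum_{n_1,n_2=0}^{N-1}F_{n_1,n_2}(x),\quad F_{n_1,n_2}(x)=\int \tH(f^{n_1}x,y)\tH(f^{n_2}x,G_{\tau_{n_2}-\tau_{n_1}}y)\d\nu(y).
\end{equation*}
I would establish $\EXP_\mu V_N\to \sigma^2$ and $\Var_\mu V_N\to 0$; Chebyshev then delivers $V_N(x_N)\to \sigma^2$ in $\mu$-probability, and $X_N:=\{|V_N-\sigma^2|<\eps_N\}$ for a suitable $\eps_N\downarrow 0$ does the job. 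For the mean, the diagonal $n_1=n_2$ contributes $O(1/\ln N)$; for $k=|n_2-n_1|\ge 1$, $f$-invariance and MLLT (applied for each $y$, with $\tH(\cdot,y)$ and $\tH(\cdot,G_uy)$ viewed as parametrized families of continuous test functions and $\tau_k$ integrated against its Gaussian density $\fg_k(u)=k^{-1}\fg(u/\sqrt k)$) yield
\begin{equation*}
\EXP_\mu[F_{n_1,n_2}]\sim \frac{\fg(0)}{k}\int\Psi(u)\d u,\quad \Psi(u):=\int \bar\tH(y)\bar\tH(G_u y)\d\nu(y),\quad \bar\tH(y):=\int \tH(x,y)\d\mu(x),
\end{equation*}
with $\Psi\in L^1$ thanks to the exponential mixing of $G_t$. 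Since $\sum_{k=1}^{N-1}(N-k)/k\sim N\ln N$, the double sum yields $\EXP_\mu V_N\to \sigma^2:=2\fg(0)\int\Psi$. For the variance, $\EXP V_N^2$ unfolds as a quadruple sum; multiple exponential mixing of order four in the fiber bounds each integrand by the product of two factors that decay exponentially in the pair-gaps $|\tau_{n_2}-\tau_{n_1}|$ and $|\tau_{n_4}-\tau_{n_3}|$, and MMLLT with $m=4$ gives asymptotic independence of the two pair-gaps for non-overlapping quadruples, producing $\EXP V_N^2\sim (\EXP V_N)^2$. The main obstacle is precisely this variance estimate: organizing the four-index sum by the combinatorics of how the indices may cluster (overlapping versus disjoint pairs, microscopic separations, triple coincidences), reconciling the macroscopic Gaussian scale $\sqrt k$ at which MMLLT operates with the order-one scale on which fiber correlations decay, and invoking ACLD to absorb the rare tight clusters of three or more indices.
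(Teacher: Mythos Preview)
Your proposal is correct and follows essentially the same route as the paper: (a) is immediate, (b) via moment bounds on the local time from the anticoncentration large deviation estimate plus a union bound, and (c) via $\EXP_\mu V_N\to\Sigma^2$ using the MLLT together with $\Var_\mu V_N\to 0$ by expanding $\EXP_\mu V_N^2$ as a four-index sum and controlling the combinatorics with the MMLLT and ACLD. One small correction: in the variance step the fiber integrand $\sigma_{n_1,n_2}\sigma_{n_3,n_4}$ is already a product of two \emph{separate} two-point $\nu$-correlations, so only ordinary (pairwise) exponential mixing in the fiber is needed there, not order-four mixing; the real work, as you correctly anticipate, is the base-side case analysis of intertwined versus non-intertwined pairs, which the paper carries out in three cases to show that only the non-intertwined configurations contribute to leading order.
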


The proposition will be proven later. Now we shall show how to obtain 
Theorem~\ref{ThCLT2} from the
proposition.

\begin{proof}[Proof of Theorem \ref{ThCLT2} assuming
Proposition \ref{LmRandMes3}]
Split 
\begin{equation}
\label{BaseFiber}
H(x,y)=\tH(x,y)+\brH(x)\quad\text{where}\quad \brH(x)=\int H(x,y) \d\nu(y).
\end{equation}
 
Denote
$$
\cS_N(x,y) = \frac{1}{\sqrt{N \ln N}} \sum_{n=0}^{N-1} \tH(F^n(x,y)).
$$
Note that $\tH$ satisfies \eqref{ZMeanFiber} and hence, 
by Proposition \ref{LmRandMes3},
$\cS_N(x,y)$ is asymptotically normal and is asymptotically independent 
from $x$. 
Finally  the contribution of $\brH$ is negligible. Indeed, the 
CLT for smooth observables gives
$\DS
\frac{1}{\sqrt{N \ln N}} \sum_{n=0}^{N-1} \brH(f^n(x)) 
\Rightarrow 0.
$
This completes the proof of the theorem (modulo Proposition \ref{LmRandMes3}).
\end{proof}



The remaining part of this section is devoted 
to the proof of Proposition \ref {LmRandMes3}. It suffices to
verify the conditions of Proposition \ref{PrBG}.

{\bf Property (a)} is clear, since $\|\fm_N(x)\|=\sqrt{N/\ln N}.$

Verifying properties (b) and (c) requires longer
computations that are presented in 
\S\ref{secdim2propb} and \S\ref{secdim2propc}.

\subsection{Property (b)}
\label{secdim2propb}

Let 
\begin{equation}\label{eq:zz} X_{K, N}=\left\{x: \Card(n:|n|<N \text{ and }\|\tau_n(x)\|\leq K\ln N)\geq 
N^{1/5}\right\}.
\end{equation}
\begin{lemma}\label{LmQuarter}
If for each $ K$, $\DS \lim_{N\to\infty} N \mu(X_{K, N})=0$, then 
there are sets $\hX_N$ such that for all $x_N \in \hX_N$ the measures
$\fm_N(x_N)$ satisfy property (b) and $\mu(\hX_N) \to 1$
\end{lemma}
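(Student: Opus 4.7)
The plan is to reduce property (b) to a combinatorial count of close encounters of the orbit $\{\tau_n(x)\}_{n<N}$, transfer that count via the cocycle identity into a statement about a single-orbit statistic to which the hypothesis and the $f$-invariance of $\mu$ can be applied, and finally use a diagonal argument in $K$ to handle all radii simultaneously. Unfolding \eqref{eq:d=2fm},
\[
\int \fm_N^{r-1}[B(t,K\ln\|\fm_N\|)]\,\d\fm_N(t) \;=\; \frac{1}{(N\ln N)^{r/2}} \sum_{n_0=0}^{N-1} c_R(n_0,x,N)^{r-1},
\]
where $R=K\ln\|\fm_N\|$ and $c_R(n_0,x,N):=\Card\{n\in[0,N):\|\tau_n(x)-\tau_{n_0}(x)\|\le R\}$. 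Since $\|\fm_N\|=\sqrt{N/\ln N}\le N$, one has $R\le K\ln N$, so it suffices to control $c_{K\ln N}$. The cocycle identity $\tau_n(x)-\tau_{n_0}(x)=\tau_{n-n_0}(f^{n_0}x)$, interpreted via the natural bi-infinite extension of $\tau_m$, then gives
\[
c_R(n_0,x,N)\;\le\;\Card\{m:|m|<N,\;\|\tau_m(f^{n_0}x)\|\le R\},
\]
so that $f^{n_0}x\notin X_{K,N}$ forces $c_{K\ln N}(n_0,x,N)<N^{1/5}$.

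Next I would construct $\hX_N$. Since $X_{K,N}\subseteq X_{K',N}$ for $K\le K'$ and the hypothesis gives $N\mu(X_{K,N})\to 0$ for every fixed $K$, a standard diagonal extraction produces $K_N\to\infty$ with $N\mu(X_{K_N,N})\to 0$. Set
\[
\hX_N \;:=\; X\setminus\bigcup_{n_0=0}^{N-1} f^{-n_0}\bigl(X_{K_N,N}\bigr).
\]
By $f$-invariance of $\mu$, $\mu(X\setminus\hX_N)\le N\mu(X_{K_N,N})\to 0$, i.e.\ $\mu(\hX_N)\to 1$. For any fixed $K$, once $N$ is large enough that $K_N\ge K$, every $x\in\hX_N$ satisfies $f^{n_0}x\notin X_{K,N}$ for all $n_0\in[0,N)$, hence $c_{K\ln N}(n_0,x,N)<N^{1/5}$ uniformly in $n_0$.

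Combining the two bounds, for $x_N\in\hX_N$, any fixed $r\ge 3$, and any fixed $K>0$,
\[
\int \fm_N^{r-1}[B(t,K\ln\|\fm_N\|)]\,\d\fm_N(t)\;\le\;\frac{N\cdot(N^{1/5})^{r-1}}{(N\ln N)^{r/2}}\;=\;\frac{N^{(8-3r)/10}}{(\ln N)^{r/2}}\;\longrightarrow\;0,
\]
since $(8-3r)/10\le-1/10<0$ for $r\ge 3$. The principal obstacle, modest as it is, lies in the shift identity of the first paragraph: it converts the pairwise count $c_R(n_0,x,N)$ into the single-orbit event defining $X_{K,N}$, thereby enabling both the $f$-invariance of $\mu$ and the union bound over $n_0\in[0,N)$ (which is the very role played by the factor $N$ in the hypothesis).
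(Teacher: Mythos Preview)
Your proof is correct and follows essentially the same route as the paper: write the integral as $(N\ln N)^{-r/2}\sum_{n_0}\Card^{r-1}(j<N:\|\tau_j-\tau_{n_0}\|\le K\ln N)$, use the cocycle relation to translate the inner count to the condition $f^{n_0}x\notin X_{K,N}$, define $\hX_N$ as the set of $x$ whose first $N$ iterates avoid $X_{K,N}$, and bound the sum by $N\cdot N^{(r-1)/5}$ to get the exponent $-1/10$ for $r\ge 3$. Your version is in fact slightly more careful than the paper's: where the paper fixes $K$, defines $\hX_N$ depending on $K$, and then says ``since $K$ is arbitrary, the result follows,'' you make the implicit diagonalization explicit by choosing $K_N\to\infty$ with $N\mu(X_{K_N,N})\to 0$ and using the monotonicity $X_{K,N}\subseteq X_{K_N,N}$ once $K_N\ge K$, so that a single sequence $\hX_N$ works for every $K$.
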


\begin{proof}
Given $K$ let 
$\DS \hX_N=\{x: f^n x\not\in X_{K, N}\text{ for } n<N\}. $
By the assumption of the lemma, $\mu(\hX_N)\to 1.$ 
On the other hand, for $x\in \hX_N$,
$$ \int \fm_N^{r-1}(x) [B (t, K\ln N)] \d \fm_N(x)=
\frac{1}{(N \ln N)^{r/2}} 
\sum_{n=0}^{N-1} \Card^{r-1} (j<N:\; \|\tau_j-\tau_n\|\leq K \ln N)$$
$$\leq N^{ -\frac{r}{2}+1 + \frac{r-1}{5}} \leq N^{-1/10},$$
where the first inequality holds since $x\in \hX_N$
and the second one holds because $r \geq 3$. 
Since $K$ is arbitrary, the result follows. 
\end{proof}

Let 
\begin{equation}
\label{DefLT}
\ell(x, t, N)=\Card(n \leq N:  |\tau_n(x)-t|\leq 1).
\end{equation}

\begin{lemma}
\label{LmLocTime} 
For each $p$ there is a constant $C_p$ such that
for each $t\in \reals^d$ and $N\in \naturals$,
\begin{equation}
\label{LocTimeMom3}
 \mu\left(\ell^p(\cdot, t,  N)\right)\leq C_p \ln^p N. 
\end{equation} 
\end{lemma}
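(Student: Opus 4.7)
The plan is to expand $\ell^p$ as a sum over $p$-tuples of time indices and apply the anticoncentration large deviation bound \eqref{eq:ACLD} termwise.

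First, I cover the Euclidean ball $\{z \in \reals^2 : |z - t| \le 1\}$ by a bounded number $L$ of closed unit cubes $C^{(1)}, \ldots, C^{(L)}$ (with $L$ depending only on the dimension) whose centers $c^{(k)}$ lie within distance $O(1)$ of $t$. Then
\[
\ell(x,t,N) \le \sum_{k=1}^{L} \sum_{n=0}^{N} \one_{\tau_n(x) \in C^{(k)}},
\]
and the elementary bound $(a_1 + \cdots + a_L)^p \le L^{p-1}(a_1^p + \cdots + a_L^p)$ reduces the claim to controlling $\mu\bigl[\bigl(\sum_{n=0}^N \one_{\tau_n \in C}\bigr)^p\bigr]$ for a single such unit cube $C$, centered at some $c$ with $\|c - t\| = O(1)$.

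Expanding the $p$-th power and grouping the multi-indices $(n_1, \ldots, n_p) \in \{0, \ldots, N\}^p$ by the number $k \in \{1, \ldots, p\}$ of distinct values they take, the task reduces, up to combinatorial constants depending only on $p$, to showing
\[
\Sigma_k := \sum_{0 \le n_1 < n_2 < \cdots < n_k \le N} \mu\bigl(\tau_{n_j}(\cdot) \in C \text{ for } j = 1, \ldots, k\bigr) \le C (1 + \ln N)^k
\]
for every $1 \le k \le p$. To bound the summand, I apply \eqref{eq:ACLD} with $s = k$, all unit cubes equal to $C$, and all centers equal to $c$. Since $\|c_j - c_{j-1}\| = 0$ for $j \ge 2$, the function $\Theta$ enters only through a bounded constant $\Theta(0) < \infty$ (which may be assumed without loss of generality, e.g.\ by replacing $\Theta$ with $\min(\Theta, M)$ for a large constant $M$, which preserves both the anticoncentration bound and the integrability hypothesis). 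Thus, with $n_0 := 0$ and $d = 2$,
\[
\mu\bigl(\tau_{n_j} \in C \text{ for } j \le k\bigr) \le K\,\Theta(0) \prod_{j=1}^k \frac{1}{n_j - n_{j-1}}.
\]

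The change of variables $m_j := n_j - n_{j-1} \ge 1$ decouples the summation constraint:
\[
\Sigma_k \le K\Theta(0) \sum_{\substack{m_1, \ldots, m_k \ge 1 \\ m_1 + \cdots + m_k \le N}} \prod_{j=1}^k \frac{1}{m_j} \le K\Theta(0) \Bigl(\sum_{m=1}^N \tfrac{1}{m}\Bigr)^{\!k} \le C (1+\ln N)^p,
\]
which, combined with the earlier reduction, establishes the bound $\mu(\ell^p) \le C_p \ln^p N$. The argument presents no serious obstacle. The only delicate point is the combinatorial bookkeeping when splitting the expansion of $\ell^p$ by collision patterns, producing Stirling-type weights which are all absorbed into $C_p$. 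The structural role of $d = 2$ is that the exponent $-d/2 = -1$ in \eqref{eq:ACLD} is precisely what produces the harmonic sum $\sum_{m=1}^N 1/m \sim \ln N$, matching the logarithmic prediction of the lemma.
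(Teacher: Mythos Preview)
Your proof is correct and follows essentially the same approach as the paper: expand $\ell^p$ as a sum over $p$-tuples, apply the anticoncentration bound \eqref{eq:ACLD} termwise to obtain products of the form $\prod_j (n_j-n_{j-1})^{-1}$, and sum to get $(\ln N)^p$. You are in fact more careful than the paper on several technical points (covering the ball by unit cubes so that \eqref{eq:ACLD} applies literally, handling coincident indices via Stirling-type bookkeeping, and addressing the finiteness of $\Theta(0)$), all of which the paper's two-line proof leaves implicit.

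One small remark: your sentence ``Since $\|c_j-c_{j-1}\|=0$ for $j\ge 2$, the function $\Theta$ enters only through $\Theta(0)$'' slightly understates the situation, since with the convention $c_0=0$ the $j=1$ term contributes $\|c\|/\sqrt{n_1}$ to the maximum. This is harmless because $\Theta$ is decreasing, so $\Theta(\|c\|/\sqrt{n_1})\le\Theta(0)$ and your bound stands; but the argument of $\Theta$ is not literally zero.
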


\begin{proof}
Since $\DS \ell(x, t, N)=\sum_{n<N} \one_{B(t, 1)}(\tau_n(x))$ we have
$$ \mu\left(\ell^p(\cdot, t, n)\right)
=\sum_{n_1, \dots n_p} \mu\left(x: |\tau_{n_j}(x)-t|\leq 1\text{ for } j=1,\dots p
\right)$$
$$\leq
\sum_{n_1, \dots n_p} K_p \frac{1}{n_1} \prod_{j=2}^p \left(\frac{1}{n_j-n_{j-1}+1}\right),
$$
where the last step uses the anticoncentration large deviation bound.
\end{proof}

Lemma \ref{LmLocTime} and Markov inequality imply that for each $\eps, t, p$ we have
$$ \mu\left(x:  \ell(x, t, N)\geq \frac{N^{1/5}}{(K \ln N)^2} 
\right)
\leq \frac{C_p (K\ln N)^{2p} }{N^{ p/5}}$$
for $N$ large enough.
It follows that
$$ \mu\left(x:  \exists t: \|t\|\leq K\ln N\text{ and }\ell(x, t, N)\geq 
\frac{N^{1/5}}{ (K \ln N)^2} 
\right)\leq  \frac{C_p (K\ln N)^{2p+2} }{N^{p/5}}. $$
Taking $p=6$ we verify the conditions of Lemma \ref{LmQuarter}.


\subsection{Property (c)}
\label{secdim2propc}

Theorem 4.7 in \cite{DDKN} implies that
\begin{equation}
\label{Eq1Cor2}
\int \tH(x, y) \tH(F^k(x,y))
\d\mu(x) \d\nu(y)=\frac{\fg(0)}{k} \varsigma_2^2+o\left(\frac{1}{k}\right),
\end{equation}
where
\begin{equation}
\label{eq:sigma2def}
\varsigma_2^2 = \iiint_{- \infty}^{\infty} \int 
\tilde H (x_1,y) \tilde H (x_2,G_t(y))
 \d \nu(y) \d t \d\mu(x_1) \d \mu(x_2)
\end{equation}
and
  $\fg$ is the limit Gaussian density of $\tau$ (that is, $\tau_N/\sqrt{N}$ converges as
  $N\to\infty$ to the normal distribution with density $\fg$).
We note that the integral \eqref{eq:sigma2def} 
converges by the exponential mixing of $G$ and \eqref{ZMeanFiber}. 
Furthermore, $\varsigma_2^2 \geq 0$,
which can be seen from the following formula (whose proof
is standard and so is omitted)
$$
\varsigma_2^2 = \lim_{T \to \infty} \frac{1}{T}
\int \left[ \int  
\int_0^T \tH(x,G_t(y))
\d t \d\mu \right]^2 \d\nu.
$$

Defining 
$$ V_N=\frac{1}{N \ln N} \int 
\left[ \sum_{n=0}^{N-1} \tH(F^n(x,y))\right]^2
\d\nu(y) \, ,$$ 
we 
compute
\begin{align*}
    V_N &= \frac{1}{N \ln N}
    \sum_{n_1 = 0}^{N - 1}
    \sum_{n_2 = 0}^{N - 1}
    \int  \tH(F^{n_1}(x,y)) \tH(F^{n_2}(x,y)) 
    \d \nu(y)\\
    &= \frac{1}{N \ln N}
    \sum_{k = 0}^{N - 1}
    (N-k) (1 + \one_{k \neq 0})
    \int  \tH(x,y) \tH(F^{k}(x,y)) 
    \d \nu(y) \, ,
\end{align*}
whence from
\eqref{Eq1Cor2} we obtain
\begin{equation}
\label{DefSigma2}
 \Sigma^2:=\lim_{N\to\infty} \mu\left(V_N(x)\right)=
 2 \fg(0) \iiiint \tH(x, y) \tH(\brx, G_t y) \d\mu(x) \d\mu(\brx) \d\nu(y) \d t.
 \end{equation}
 
By Chebyshev's inequality, to establish {\em property (c)}, it suffices to show that 
\begin{equation}
\label{VarVar}
 \lim_{N\to\infty} \mu(V_N^2)=\Sigma^4. 
 \end{equation}
Note that 
$$\mu(V_N^2)=\frac{1}{N^2 \ln^2 N} \sum_{n_1, n_2, n_3, n_4} 
\mu(\sigma_{n_1, n_2} \sigma_{n_3, n_4}),$$
where
$$ \sigma_{n_1, n_2}(x)=\int H(f^{n_1} x, G_{\tau_{n_1}x} y) 
H(f^{n_2}x, G_{\tau_{n_2}(x)} y) \d\nu(y). $$
Fix a large $L$ and let $\bk=\bk(n_1, n_2, n_3, n_4)$ be the number of indices
$j$ such that $|n_i-n_j|\geq L$ for all $i\neq j.$ The number of terms 
where $\bk\leq 1$ is $O(L^2 N^2)$. Using
the trivial bound 
$|\mu(\sigma_{n_1, n_2} \sigma_{n_3, n_4})|
\leq \| \tH\|_{\infty}^2$, we see that the
contribution of terms with $\bk \leq 1$
is  $O(N^2 L^2)$ which is negligible.

Next, we consider the contribution of
the terms with $\bk=4$. 
{Without loss of generality, we will assume that 
\begin{equation}
\label{eq:combcond}
n_1<n_2, \quad n_3<n_4, \quad n_1<n_3.
\end{equation}
 We  distniguish $3$ cases based on  the relative position of $n_2$ with respect to $n_3$ and $n_4.$
\\

{\bf Case 1}.
We claim that
 for each $\eps>0$ there is $L$ such that if 
\begin{equation}
\label{eq:MMLLTappcase1}
n_1 < n_1 + L \leq n_2 < n_2 + L \leq n_3 < n_3 + L \leq n_4,
\end{equation}
then
\begin{equation}\label{eq:b-12} 
\left|\mu\left(\sigma_{n_1, n_2} \sigma_{n_3, n_4}\right)-\frac{\Sigma^4}{4|n_2-n_1| |n_4-n_3|}
\right|
\leq \frac{\eps}{|n_2-n_1| |n_4-n_3|}.
\end{equation} 
}

The proof of \eqref{eq:b-12} is based on
\cite{DDKN}. First, by \cite[Lemma 3.3]{DDKN},
we can assume that 
$\tH(x,y) = A(x) B(y)$ and without loss of generality we assume
$\nu(B)$ = 0. Then we have
$$
\mu\left(\sigma_{n_1, n_2} \sigma_{n_3, n_4}\right)
= 
\int \left[ \prod_{i=1}^4A(f^{n_i}x)\right]
\rho(\tau_{n_2-n_1}(f^{n_1}x))
\rho(\tau_{n_3-n_4}(f^{n_4}x))\d\mu(x),
$$
where
$\rho(t)=\int B(y)B(G_ty)\d\nu(y)$.
Note that for $\tH$ as above \eqref{VarVar} simplifies to
\begin{equation}
\label{eq:Sigmasq}
    \Sigma^2 = 2 \fg(0) \mu(A)^2 \int \rho(t) \d t.
\end{equation}
The remaining part of the proof of  \eqref{eq:b-12}
closely follows the lines of 
\cite[Theorems 4.6, 4.7]{DDKN} and so we only give a sketch.
Decompose $\reals^2$ into a countable
disjoint family of small squares $\cC_i$. Let $z_i$ be
the center of square $\cC_i$. Then
$$
\mu\left(\sigma_{n_1, n_2} \sigma_{n_3, n_4}\right)
\approx \sum_{i,j} S_{i,j},
$$
where
$$S_{i,j} = \rho(z_i) \rho(z_j)
\int \prod_{k=1}^4A(f^{n_k}x)
\one_{\cC_i}(\tau_{n_2-n_1}(f^{n_1}x))
\one_{\cC_j}(\tau_{n_3-n_4}(f^{n_4}x))\d\mu(x).
$$
Fixing $i,j$, letting $L \to \infty$ and 
{using the MMLLT and \eqref{eq:MMLLTappcase1},
we find
$$
S_{i,j}
\approx 
\frac{1}{|n_2 - n_1|} \frac{1}{|n_4 - n_3|}
\int_{\cC_i} \rho(t) \d t \int_{\cC_j} \rho(t) \d t
\fg(0)^2 [\mu(A)]^4.
$$
Summing over $i,j$ and using \eqref{eq:Sigmasq}, we obtain
\eqref{eq:b-12}.\\

{\bf Case 2.}
We claim that
\begin{equation}
\label{eq:MMLLTappcase2}
\sum_{(n_1,...,n_4): n_1 <n_3<n_4<n_2} 
\mu\left(\sigma_{n_1, n_2} \sigma_{n_3, n_4}\right) \leq
C N^2 \ln N.
\end{equation}
To prove \eqref{eq:MMLLTappcase2}
take
$(n_1,...,n_4)$   as in \eqref{eq:MMLLTappcase2}
and write
$$
m_0 = 0, \quad m_1 = n_3 - n_1, \quad  m_2 = n_4 - n_1, \quad m_3 = n_2 - n_1.
$$
Then we have
$$
\mu\left(\sigma_{n_1, n_2} \sigma_{n_3, n_4}\right) = 
\int \left[\prod_{i=0}^3A(f^{m_i}(x))\right] \rho(\tau_{m_3}(x))
\rho(\tau_{m_2 - m_1}(f^{m_1}x)) \d \mu(x).
$$
Decompose $\mathbb R^2$ into unit boxes $\cC_i$ with center $z_i$. Then, we find
$$
 \left|\mu\left(\sigma_{n_1, n_2} \sigma_{n_3, n_4}\right)\right| 
\leq C
\sum_{i_1,i_2,i_3} \tilde \rho(z_{i_3}) \tilde \rho(z_{i_2} - z_{i_1}) 
\int {\prod_{j=1}^3} \one_{\tau_{m_j} \in { \cC_{i_j}}}  \d \mu(x),
$$
where
$\DS 
\tilde \rho(t) = \sup_{t' : \| t' - t \| \leq 1}  \left|\rho(t)\right|.
$
Applying \eqref{eq:ACLD}, we find
\begin{align}
 \left|\mu\left(\sigma_{n_1, n_2} \sigma_{n_3, n_4}\right)\right|
 \leq & C
\sum_{i_1,i_2,i_3} \frac{ \tilde \rho(z_{ i_3}) \tilde \rho(z_{i_2} - z_{i_1}) }
{m_1(m_2 - m_1)(m_3 - m_2)}
\Theta\left(\max_{1 \leq j\leq 3} \frac{\| z_{i_j} - z_{i_{j-1}}\| }{\sqrt{m_j - m_{j-1}}} \right)
\label{eq:case2step1} \\
 & =: \sum_{i_1,i_2,i_3} \cS_{i_1,i_2,i_3}(m_1,m_2,m_3).
\nonumber
\end{align}
Let us consider the special case when $i_2 = i_1$ and $z_{i_3} = 0$ (without loss of generality we say 
that in this case $i_3 = 0$). Then we have
\begin{equation}
\label{eq:case2step2}
 \sum_{i_1} \cS_{i_1,i_1,0} (m_1,m_2,m_3)
 \leq 
\end{equation}
$$ 
 C
\sum_{i_1} 
\frac{1}{m_1}
\frac{1}{m_2 - m_1}
\frac{1}{m_3 - m_2}
\Theta \left( \frac{\|z_{i_1}\|}{\min \{ \sqrt{m_1}, \sqrt{m_3 - m_2}\}} \right).
$$
Note that the last expression is symmetric in $m_1 = n_3 - n_1$ and $m_3 - m_2 = n_2 - n_4$.
Thus without loss of generality we can assume $n_3 - n_1 \leq n_2 - n_4$ (indeed, the multiplier
$2$ can be incorporated into the constant $C$). Denoting $a = n_3 - n_1$, $b = n_4 - n_3$,
$c = n_2 - n_4$, we obtain
$$
\sum_{(n_1,...,n_4): n_1 <n_3<n_4<n_2 < N} 
\sum_{i_1} \cS_{i_1,i_1,0}(m_1,m_2,m_3)
\leq C
N \sum_i \sum_{a = 1}^{N} \sum_{b=1}^N \sum_{c=a}^N \frac{1}{abc} \Theta
\left( \frac{\|z_{i}\|}{\sqrt a} \right).
$$
Note that the multiplier $N$ accounts for all choices of $n_1$. Thus
$$
\sum_{(n_1,...,n_4): n_1 <n_3<n_4<n_2 < N} 
\sum_{i_1} \cS_{i_1,i_1,0}(m_1,m_2,m_3)
 \leq C
N \ln N \sum_i \sum_{a = 1}^{N} \sum_{c=a}^N \frac{1}{ac} \Theta
\left( \frac{\|z_{i}\|}{\sqrt a} \right)
$$$$
\leq C N \ln N  \int_{\reals^2} \int_{1}^{N} \int_{a}^N 
\frac{1}{ac} \Theta
\left( \frac{\|z\|}{\sqrt a} \right) \d c \d a\d z
\leq C N \ln N  \int_{1}^{N} \int_{a}^N 
\frac{1}{c} \d c \d a
\leq C N^2 \ln N
$$
where in the third inequality we used that 
$\DS {\int_1^{\infty} \Theta(r) r dr} < \infty$ and the
last inequality follows from direct integration. 
{Thus we have  verified
that the terms corresponding to $i_1 = i_2$ and $i_3 = 0$ are in $O(N^2 \ln N)$.

To complete the proof  of 
\eqref{eq:MMLLTappcase2}, we need to estimate the contribution of the sum corresponding to all
$i_2$ and $i_3$. To this end, we will
distinguish two cases based on whether the following assumption holds or not:
\begin{equation}
\label{eq:i2i3cluster}
\| z_{i_2} - z_{i_1} \| < \frac{1}{4} \|z_{i_1} \|, \text{ and }
\| z_{i_3} - z_{i_2} \| < \frac{1}{4} \|z_{i_1} \|.
\end{equation}
For any given fixed $i_1$, let 
$\widehat \sum_{i_2,i_3} $ denote the sum over $i_2, i_3$ that do not satisfy \eqref{eq:i2i3cluster}
and let 
$\widetilde \sum_{i_2,i_3} $ denote the sum for $i_2, i_3$ satisfying \eqref{eq:i2i3cluster}.

First, we assume that \eqref{eq:i2i3cluster} fails. Then, we have
\begin{multline*}
\sum_{i_1} \widehat{\sum_{i_2,i_3}} \cS_{i_1,i_2,i_3} (m_1,m_2,m_3) 
 \leq C
\sum_{i_1}  \widehat{\sum_{i_2,i_3}} \tilde \rho(z_{i_2} - z_{i_1}) \tilde \rho(z_{i_3}) 
\frac{1}{m_1}
\frac{1}{m_2 - m_1}
\frac{1}{m_3 - m_2} \nonumber \\
\times \left[ \Theta \left( \frac{\|z_{i_1}\|}{4 \min \{ \sqrt{m_1}, \sqrt{m_2 - m_1}\}} \right)
+ 
 \Theta \left( \frac{\|z_{i_1}\|}{4 \min \{ \sqrt{m_1},  \sqrt{m_3 - m_2}\}} \right)
\right].
\nonumber 
\end{multline*}
Now we can perform the summation with respect to 
$i_2,i_3$ first. 
 Denote $a = m_1$, $b = m_2 - m_1$, $c = m_3 - m_2$.
By the exponential decay of $\tilde \rho$ 
we get
\begin{multline*}
\sum_{i_1} \widehat {\sum_{i_2,i_3}} \cS_{i_1,i_2,i_3} (m_1,m_2,m_3) 
\leq C \sum_{i_1} 
\frac{1}{abc}
\left[ \Theta\! \left( \frac{\|z_{i_1}\|}{4 \min \{ \sqrt{a}, \sqrt{b}\}} \right)
\!+\! 
 \Theta\!\left( \frac{\|z_{i_1}\|}{4 \min \{ \sqrt{a},  \sqrt{c}\}} \right)
\right]\!.
\end{multline*}
Summing
the last displayed formula for $n_1,n_2,n_3,n_4$, we obtain a term $O(N^2 \ln N)$ exactly as in 
the estimate of \eqref{eq:case2step2}.

Now we assume that \eqref{eq:i2i3cluster} holds.
Then observing that
$
\| z_{i_3}\| \geq \| z_{i_1}\| / 2,
$
and using the exponential decay of $\tilde \rho$, we find that
$$
\tilde \rho(z_{i_2} - z_{i_1}) \tilde \rho(z_{i_3})
\leq C e^{-c \| z_{i_1} \|}
$$
with a fixed constant $c$.
Thus we conclude
 \begin{multline*}
\sum_{m_1 \leq m_2 \leq m_3 \leq N}
\sum_{i_1} \widetilde{\sum_{i_2,i_3}} \cS_{i_1,i_2,i_3}  \\
\leq C
\sum_{m_1 \leq m_2 \leq m_3 \leq N}
 \frac{1}{m_1}
\frac{1}{m_2 - m_1}
\frac{1}{m_3 - m_2}
\sum_{i_1} \widetilde {\sum_{i_2,i_3} }
e^{-c \| z_{i_1} \|}
\leq C  \ln^3 N.
\end{multline*}
Summing over all choices of $n_1$, we obtain a
term in $O(N  \ln^3N)$. }
This completes the proof of 
\eqref{eq:MMLLTappcase2}.\\

{\bf Case 3.}
We claim that
\begin{equation}
\label{eq:MMLLTappcase3}
\sum_{(n_1,...,n_4): n_1 <n_3<n_2<n_4} 
\mu\left(\sigma_{n_1, n_2} \sigma_{n_3, n_4}\right) \leq
C N^2 \ln N.
\end{equation}

The proof of \eqref{eq:MMLLTappcase3} is similar to that of \eqref{eq:MMLLTappcase2}.
Indeed, the right hand side of \eqref{eq:case2step1} is now replaced by
$$
C
\sum_{i_1,i_2,i_3} \frac{\tilde \rho(z_{i_2}) \tilde \rho(z_{i_3} - z_{i_1}) }
{m_1(m_2 - m_1)
(m_3 - m_2)}
\Theta\left(\max_j \frac{\| z_{i_j} - z_{i_{j-1}}\| }{\sqrt{m_j - m_{j-1}}} \right).
$$
{We consider the special case $i_2 = 0, i_3 = i_1$ first. Then \eqref{eq:case2step2}
is replaced by 
 \begin{multline}
\label{eq:case2step2v2}
\sum_{i_1} \cS_{i_1,0,i_1} (m_1,m_2,m_3) \\
 \leq C
\sum_{i_1} 
\frac{1}{m_1}
\frac{1}{m_2 - m_1}
\frac{1}{m_3 - m_2}
\Theta \left( \frac{\|z_{i_1}\|}{\min \{ \sqrt{m_1}, \sqrt{m_2 - m_1}, \sqrt{m_3 - m_2}\}} \right).
\end{multline}
which is bounded by the right hand side of \eqref{eq:case2step2} (perhaps with a 
different constant $C$) and hence is in $O(N^2 \ln N)$.
The contribution of general $i_2$, $i_3$ can be bounded as before. \eqref{eq:MMLLTappcase3} follows.}

\smallskip
The upshot of the above three cases is that
the leading term only comes from indices $n_1,...,n_4$
satisfying \eqref{eq:MMLLTappcase1}.
Summing $\mu(\sigma_{n_1,n_2} \sigma_{n_3,n_4})$
for all indices $n_1,...,n_4$
satisfying \eqref{eq:MMLLTappcase1}, we obtain
$N^2 \ln^2 N [\Sigma^4 /8 + o_{L \to \infty}(1)]$ 
 where we have used 
\eqref{eq:b-12} and the fact that
$\DS \sum_{n_1<n_3<N} \ln (n_3-n_1) \ln (N-n_3)=\frac{N^2 \ln^2 N}{2} (1+o_{N\to\infty}(1)). $

\smallskip
Next we claim that
\begin{equation}
\label{D2VarSum}
\sum_{n_1,...,n_4: \bk = 4} \mu(\sigma_{n_1, n_2} \sigma_{n_3,n_4})
= [ \Sigma^2  + o_{L \to \infty}(1)] N^2 \ln^2 N.
\end{equation}

 Indeed similarly to the case of \eqref{eq:MMLLTappcase1} considered above we see that the main 
contribution to our sums comes from the terms where the pairs $(n_1, n_2)$ and $(n_3, n_4)$ are not 
intertwined. Note that if we choose a random permutation of $(n_1, n_2, n_3, n_4)$ then the
probability of non intertwining is 1/3 (since the second element should come from the same 
pair as the first one). Therefore there are 24/3=8 permutations
\footnote{These eight permutations corresponds to all possible choices of signs
($*\in \{\leq, \geq\}$)
 in the 
inequalities 
$$n_1* n_2, \quad n_3*n_4,\quad  \min(n_1, n_2)* \min(n_3, n_4)$$}
 contributing to our sum
which explains the additional factor of 8 in \eqref{D2VarSum}.

}

To complete the proof of property (c),
it remains to verify that the contribution 
of terms with $\bk = 2,3$
is negligible. First note that $\bk = 3$ is impossible
by the definition of $\bk$. Finally, if 
$\bk = 2$, the we can repeat a simplified version of the
proof for $\bk = 4$ using the MLLT instead
of the MMLLT. For example, let us consider the
case $n_1\leq n_2\leq n_3\leq n_4$.
Since $\bk = 2$, we must have  
$|n_2 - n_1| \geq L$ or $|n_4 - n_3| \geq L$. Without loss
of generality assume $|n_4 - n_3| \geq L$. Then, using the 
fact that due to the exponential mixing of $G$,
$ \sigma_{n_3, n_4}\leq C e^{-c \ell} $ on the set $\|\tau_{n_4}-\tau_{n_3}\|\in [\ell, \ell+1]$, we obtain from the MLLT 
that
$$\mu\left(\sigma_{n_1, n_2} \sigma_{n_3, n_4} \right)=O\left(\frac{1}{ |n_4-n_3|}\right). $$
Hence the total contribution 
terms with $\bk = 2$ is in $O\left(L N^2 \ln N\right)$.
This completes the proof of 
property (c).
We have finished the proof of Proposition
\ref{LmRandMes3} and hence the proof of Theorem \ref{ThCLT2}.


\section{Dimension one}
\label{ScD=1}
\subsection{Reduction to the limit theorem for the variance.}
Here we prove Theorem~\ref{ThCLT1}.

We use the decomposition \eqref{BaseFiber}. Since $f$
satisfies the CLT for smooth observables, $\DS \frac{1}{\sqrt N} \sum_{n=1}^N \bar H (f^n(x)) $
converges weakly, and so this sum is negligible after rescaling by $N^{3/4}$. It remains to study the ergodic 
sum of $\tH$.

Following the ideas of the previous section, we let
\begin{equation}
\label{DefMND1}
 \fm_N(x)=\frac{1}{\sqrt{V_N(x)}} \sum_{n=0}^{N-1}  \delta_{\tau_n(x)},
\end{equation}
$$A_{t,N,x}(y)=\frac{1}{\mbox{Card}({n\leq N: \tau_n(x)=t})} 
\sum_{n\leq N: \tau_n(x)=t} \tH(f^n(x),y),
$$
where
$$ V_N(x)=\int S_N^2(x,y) \d\nu(y), \quad 
S_N(x,y) = \sum_{n=0}^{N-1} \tH \circ F^n(x,y).
$$
Thus in the notation of Proposition \ref{PrBG}, we have 
$\DS \cS_N=\frac{S_N}{\sqrt{V_N(x)}}.$

In contrast with Theorem \ref{ThCLT2}, $V_N(x)$ does not satisfy
a weak law of large numbers. Instead we have
\begin{proposition}
\label{prop:localtime} 
There is a constant $\Lambda$ so that
$\frac{V_N}{ \Lambda N^{3/2}}$ converges in law as $N\to\infty$ to the random variable $\cL^2$ given by 
\eqref{DefL}. 
\end{proposition}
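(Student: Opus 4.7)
The plan is to show that $V_N(x)$ is close, after normalization by $N^{3/2}$ and in probability, to $\Lambda \int_{\reals} \ell^2(x,t,N)\,dt$, and then to invoke an invariance principle for the local time to identify the limit as $\Lambda \cL^2$. The proof proceeds in three steps.

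\textbf{Step 1: Product reduction and localization.} By a density argument as in \cite[Lemma 3.3]{DDKN}, it suffices to treat $\tH(x,y)=A(x)B(y)$ with $\nu(B)=0$; the general case follows by approximation. Then
$$V_N(x)=\sum_{n_1,n_2=0}^{N-1} A(f^{n_1}x)\,A(f^{n_2}x)\,\rho\bigl(\tau_{n_2}(x)-\tau_{n_1}(x)\bigr),$$
where $\rho(t)=\int_Y B(y)B(G_t y)\,d\nu(y)$ decays exponentially in $|t|$ by the exponential mixing of $G_t$. Consequently, the sum effectively concentrates on pairs with $|\tau_{n_2}(x)-\tau_{n_1}(x)|\leq L$ for $L$ large but fixed; the complementary tails are negligible in $L^1(\mu)$ by the exponential decay of $\rho$ and the $d=1$ analog of Lemma~\ref{LmLocTime}.

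\textbf{Step 2: Mean-field approximation.} The central step is to establish, in probability,
$$V_N(x) = \Lambda \int_\reals \ell^2(x,t,N)\,dt + o(N^{3/2}), \qquad \Lambda := \mu(A)^2\int_\reals \rho(s)\,ds.$$
The heuristic is to replace $A(f^{n_1}x)A(f^{n_2}x)$ by $\mu(A)^2$; after doing so,
$$\sum_{n_1,n_2}\rho\bigl(\tau_{n_2}-\tau_{n_1}\bigr) = \int_\reals\rho(s)\int_\reals \ell(x,t,N)\ell(x,t+s,N)\,dt\,ds \approx \Bigl(\int\rho\Bigr)\int_\reals \ell^2(x,t,N)\,dt,$$
since $\ell(x,\cdot,N)$ varies on the macroscopic scale $\sqrt N$ while $\rho$ is supported on $O(1)$. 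The replacement of $A$-products by $\mu(A)^2$ is justified by a fourth moment computation on the error: for pairs with $|n_1-n_2|$ large, multiple mixing of $f$ provides decorrelation; for close pairs, the anticoncentration bound of order $4$ ensures that the total number of quadruples $(n_1,\dots,n_4)$ with $|\tau_{n_2}-\tau_{n_1}|\leq L$ and $|\tau_{n_4}-\tau_{n_3}|\leq L$ is too small to contribute at order $N^{3/2}$. This mirrors the Case~1--3 analysis of Section~\ref{secdim2propc}, adapted to the $d=1$ local time scaling.

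\textbf{Step 3: Local time invariance principle.} The rescaled local time $g_N(u):=N^{-1/2}\ell(x, u\sqrt N, N)$ converges in distribution, as $N\to\infty$, to the Brownian local time $\ell_u$, in a suitable function space (say $L^2(\reals)$). Finite-dimensional convergence at points $u_1,\dots,u_k$ follows from the MMLLT applied with $A_0=A_1=\dots=A_k\equiv 1$ to joint visits of $\tau_{n_j}$ to neighborhoods of $u_j\sqrt N$; tightness follows from moment bounds on $g_N$ obtained as in Lemma~\ref{LmLocTime} (with the $d=1$ exponent $N^{p/2}$). Since $g\mapsto \int g^2$ is continuous on this space, a change of variables gives
$$\frac{1}{N^{3/2}}\int_\reals \ell^2(x,t,N)\,dt = \int_\reals g_N(u)^2\,du \Rightarrow \int_\reals \ell_u^2\,du = \cL^2,$$
which, combined with Step~2, completes the proof.

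\textbf{Main obstacle.} The main difficulty is the fourth moment bound underlying Step~2. A brute-force sum over quadruples $(n_1,n_2,n_3,n_4)$ requires a delicate combinatorial split by the ordering of the indices and by the coincidences among the $\tau_{n_i}$, controlled by the anticoncentration bound of order $4$ and the multiple mixing of $f$. The exponential decay of $\rho$ is essential for localization, but isolating the leading $\Lambda^2 \bigl(\int \ell^2\bigr)^2$ contribution and showing the remainder is $o(N^3)$ is the most technically demanding part of the argument.
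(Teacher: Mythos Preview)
Your approach differs substantially from the paper's. The paper does not attempt a pathwise comparison of $V_N$ with the self-intersection local time, nor does it prove or invoke a local-time invariance principle for the base dynamics. Instead it uses the method of moments: one computes $\mu\bigl((V_N/N^{3/2})^k\bigr)$ directly via the MMLLT and the anticoncentration bound, shows that the limits $J_k$ exist and are \emph{universal} (they depend only on $k$, not on the particular system $(f,\tau,G,H)$), and verifies $J_k\le C^k k!$ so that the moment problem is determinate. Since the simple random walk in random scenery satisfies the hypotheses and, by Kesten--Spitzer, has limit $\cL^2$, the universal moments must be those of $\cL^2$, and the conclusion follows for every admissible system at once. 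This comparison trick bypasses the functional limit theorem you need in Step~3 entirely.

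Step~3 is where the real gap lies, and your identification of Step~2 as the main obstacle is misplaced. What you call ``finite-dimensional convergence at points $u_1,\dots,u_k$'' together with pointwise moment bounds does not yield weak convergence in $L^2(\reals)$: point evaluations are not continuous linear functionals on $L^2$, and tightness there requires uniform control on $\|g_N(\cdot+h)-g_N(\cdot)\|_{L^2}$ and on the tails $\|g_N\one_{|u|>R}\|_{L^2}$, neither of which follows from Lemma~\ref{LmLocTime}-type estimates alone. A local-time invariance principle for ergodic sums under MMLLT hypotheses is a substantial result in its own right---comparable in difficulty to the proposition you are proving---and is not available off the shelf in this generality. (A symptom that the details are not worked out: your constant $\Lambda=\mu(A)^2\int\rho=\varsigma_2^2$ misses the factor $\sqrt\pi/\varsigma_1$ in \eqref{defLambda}, which enters precisely through the relation between $g_N$ and Brownian local time.) The paper's moment-plus-universality argument is designed exactly to sidestep this difficulty.
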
  

Before proving Proposition \ref{prop:localtime}, let us  use it to derive Theorem \ref{ThCLT1}.
We start with the following analogue of  Proposition  \ref{LmRandMes3}:

\begin{lemma}
\label{LmRandMes1}
Under the assumptions of Theorems \ref{ThCLT1},
there are subsets $X_N\subset X$ such that
$\DS \lim_{N\to\infty} \mu(X_N)=1$ and for any sequence $x_N\in X_N$
the measures $\{\fm_N(x_N)\}$ satisfy the conditions of Proposition \ref{PrBG}
with $\sigma = 1$.
\end{lemma}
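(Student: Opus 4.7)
The plan is to mirror the proof of Proposition \ref{LmRandMes3}, with the crucial adjustment that the deterministic normalization $\sqrt{N \ln N}$ appearing in \eqref{eq:d=2fm} is replaced by the random $\sqrt{V_N(x)}$ from \eqref{DefMND1}. This choice makes property (c) of Proposition \ref{PrBG} essentially automatic (with $\sigma = 1$), and the real work shifts to defining a good set $X_N$ on which properties (a) and (b) can be checked pointwise.

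I would let $X_N$ be the intersection of two events. First, the two-sided control $\epsilon_N N^{3/2} \leq V_N(x) \leq C_N N^{3/2}$ with, say, $\epsilon_N = 1/\ln N$ and $C_N = \ln N$: by Proposition \ref{prop:localtime} together with the a.s.\ strict positivity and finiteness of $\Lambda \cL^2$, this event has $\mu$-measure tending to $1$. Second, a uniform local time bound $\sup_t \ell(x, t, N) \leq N^{1/2 + \delta}$ for a small fixed $\delta > 0$: the $1$-dimensional analogue of Lemma \ref{LmLocTime} gives $\mu(\ell^p(\cdot, t, N)) \leq C_p N^{p/2}$, since the sum $\sum_{0 = n_0 < \cdots < n_p \leq N} \prod_j (n_j - n_{j-1})^{-1/2}$ is $O(N^{p/2})$ via the anticoncentration large deviation bound. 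Markov's inequality plus a union bound over the $O(N)$ relevant values of $t$ then yields a full measure event once $p$ is taken large.

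With $X_N$ in hand, the three properties are verified directly. For (a), on $X_N$ one has $\|\fm_N(x)\| = N/\sqrt{V_N(x)} \geq N^{1/4}/\sqrt{\ln N} \to \infty$. For (c),
\[
\int \cS_N^2(y)\,\d\nu(y) = \frac{1}{V_N(x)}\int S_N^2(x,y)\,\d\nu(y) = 1,
\]
so $\sigma = 1$ identically. For (b), since $\|\fm_N\| \leq N^{1/4}\sqrt{\ln N}$ on $X_N$, we have $K\ln\|\fm_N\| \leq K \ln N$ for large $N$, and each ball carries mass at most
\[
\fm_N\bigl[B(t, K\ln N)\bigr] \leq \frac{(2K\ln N + 1)\,N^{1/2+\delta}}{\sqrt{V_N(x)}} \leq C (\ln N)^{3/2}\, N^{\delta - 1/4}.
\]
Raising to the $(r-1)$-th power and integrating against $\fm_N$ (total mass $\leq N^{1/4}\sqrt{\ln N}$), the required integral is bounded by $(\ln N)^{O(r)} N^{\delta(r-1) - (r-2)/4}$, which tends to $0$ whenever $r \geq 3$ and $\delta < 1/8$.

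The main obstacle is coordinating the two-sided control on $V_N$: the upper bound is what makes property (a) quantitative, while the lower bound is what lets property (b) absorb $(N^{1/2+\delta})^{r-1}$ against $V_N^{r/2}$. The slow rates $\epsilon_N = C_N^{-1} = 1/\ln N$ tolerate the polylogarithmic losses in (b) and are still slow enough for $\mu(X_N) \to 1$ to follow from Proposition \ref{prop:localtime}; any polynomial decay of $\epsilon_N$ would jeopardise one of the two inequalities.
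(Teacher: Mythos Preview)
Your proof is correct and follows essentially the same route as the paper: define $X_N$ by a two--sided window on $V_N$ together with a uniform local--time bound obtained from the moment estimate $\mu(\ell^p(\cdot,t,N))\leq C_p N^{p/2}$ via Markov and a union bound, then verify (a), (b), (c) directly. The paper uses the polynomial window $N^{1.49}<V_N<N^{1.51}$ (with $\delta=0.01$ for the local time and the range $|t|<N^{0.6}$), which shows that your closing remark is too pessimistic: small polynomial slack in $\eps_N$ and $C_N$ is perfectly compatible with both $\mu(X_N)\to1$ and the arithmetic in (b).
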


\begin{proof}[Proof of Lemma \ref{LmRandMes1} assuming Proposition \ref{prop:localtime}]

Clearly, $\| \fm_N \| \to \infty$ for a large set of $x$'s and so (a) holds.
Recalling that $\cS_N (x,y) =  S_N(x,y) / \sqrt{V_N(x)}$, property (c) holds
trivially for all $x \in X$. It remains to show
property (b).

Note that $\cL^2$ is non-negative and non-atomic at zero, i.e.
for any $\eps > 0$ there is $\eps' > 0$ so that $\mathbb P (\cL^2 < \eps') < \eps$
and so by Proposition \ref{prop:localtime}, 
$$ \lim_{N\to\infty} \mu(x: N^{1.49} < V_N(x) < N^{1.51}) =1.$$
Thus we can assume that $x$ is such that $N^{1.49} < V_N(x) < N^{1.51}$.
Next, we define
$$
\tilde \ell (x,t,N) = \frac{\Card (n \leq N: |\tau_n(x) - t| < 1)}{\sqrt N}.
$$
As in the proof of Lemma \ref{LmLocTime},
we find that for every $p \in \integers_+$, 
 every $t\in \reals$, and every $N\in\naturals$ we have
 $\mu(\tilde \ell^p(.,t,N)) < C_p$. 
Then choosing $p=1000$, the Markov inequality gives
$\mu(x: \tilde \ell (x,t,N) > N^{1/100}) < C N^{ -10}$. Thus we can assume that for all $t \in \integers$ with $|t| < 
N^{0.6}$,
$\tilde \ell (x,t,N) < N^{1/100}$ holds. By the anticoncentration large deviation bounds, we can also assume that 
$\DS \max_{n \leq N} |\tau_n(x)| \leq N^{0.6}$
since the set of points where this condition fails has negligible measure. In summary,
 the measure of the set of $x$'s satisfying that 
 $$
N^{1.49} < V_N(x) < N^{1.51} \quad\text{and}\quad
\tilde{\ell}(x,t, N)\leq \begin{cases} N^{0.01} & |t|\leq N^{0.6}, \\ 0 & |t|\geq N^{0.6}. \end{cases} $$
 tends to $1$ as $N\to\infty.$
For any such $x$, we have
$$ \int \fm_N^{r-1}(x) (t, K\ln N) \d \fm_N(x)=
\frac{1}{(V_N(x))^{r/2}} 
\sum_{n=0}^{N-1} \Card^{r-1} (j<N:\; |\tau_j-\tau_n|\leq K \ln \| \fm_N \|)$$
$$
\leq \frac{1}{(N^{1.49})^{r/2}} 
 \sum_{t\in \integers: |t|\leq N^{0.6}} \ell^r(x, t, N)\leq
2 N^{0.51r+0.6-0.745r}=o(1). 
$$
if $r \geq 3$. Property (b) and the lemma follow.
\end{proof}

\begin{proof}[Proof of Theorem \ref{ThCLT1} assuming Proposition \ref{prop:localtime}]
Recall that
$\DS
\cS_N(x,y) = \frac{S_N(x,y)}{ \sqrt{V_N(x)}}.
$
By Lemma \ref{LmRandMes1} and Proposition \ref{PrBG}, 
there is a sequence of subsets $X_N \subset X$ with $\mu(X_N) \to 1$
such that for every sequence $x_N \in X_N$, 
the distribution of 
$\cS_N(x,y)$ w.r.t. $\nu(y)$ converges to the standard normal (note that this time 
properties (a) and (c) are immediate). 

 In fact, we have the stronger statement that 
\begin{equation}
\label{ConvPair}
\left(\frac{V_N}{\Lambda N^{3/2}}, \cS_N\right)\Rightarrow 
(\cL^2, \cN)\quad\text{as}\quad N\to \infty,
\end{equation}
where $\cL$ is given by \eqref{DefL}, 
$\cN$ is standard normal and $\cL^2$ and $\cN$ are independent.
 This follows from Proposition \ref{prop:localtime}, Lemma \ref{LmRandMes1} and the asymptotic independence of $\cS_N$ and $V_N$ which comes from the fact that
$V_N$ depends only on $x$ while $\cS_N$ is asymptotically independent of $x$.
More precisely, let $\phi$ and $\psi$ be two continuous compactly supported test functions.
Then
$$\lim_{N\to\infty} \zeta\left(\phi\left(\frac{V_N}{\Lambda N^{3/2}}\right) \psi(\cS_N)\right)=
\lim_{N\to\infty} \zeta\left(\phi\left(\frac{V_N}{\Lambda N^{3/2}}\right) \psi(\cS_N) \one_{X_N} \right)
$$$$
=\lim_{N\to\infty} \int_{X_N} \left[\phi\left(\frac{V_N}{\Lambda N^{3/2}}\right) \int_Y \psi(\cS_N)\d\nu   
\right]
\d\mu
=\EXP(\psi(\cN)) \lim_{N\to\infty} \int_{X_N} \phi\left(\frac{V_N}{\Lambda N^{3/2}}\right) 
\d\mu
$$$$
=\EXP(\psi(\cN)) \lim_{N\to\infty} \int_{X} \phi\left(\frac{V_N}{\Lambda N^{3/2}}\right) 
\d\mu=\EXP(\psi(\cN)) \EXP(\phi(\cL^2)), 
$$
where the first  and the fourth equalities hold since $\mu(X_N)\to 1,$ 
the third inequality holds by Proposition~\ref{PrBG}, and the last equality holds
by Proposition \ref{prop:localtime}.
This proves \eqref{ConvPair}.

Denote $\Sigma = \sqrt \Lambda$. By \eqref{ConvPair} and
the continuous mapping theorem,
$$ \frac{S_N}{\Sigma N^{3/4}}=
\frac{\sqrt{V_N}}{\sqrt{\Lambda N^{3/2}}} \times \frac{\cS_N}{\sqrt{V_N}}
\Rightarrow \cL \cN 
$$
completing the proof of the Theorem \ref{ThCLT1}.
\end{proof}

\subsection{Proof of Proposition \ref{prop:localtime}}
We are going to use the following well known fact (see
 e.g. \cite{B} Chapter 1.7, Problem 4).
If $\mathcal X_n$ is a sequence of random variables so that for every $k \in \integers_+$
\begin{equation}
\label{eq:moments}
J_k = \lim_{n \to \infty} \mathbb E (\mathcal X_n^k) 
\end{equation} exists and 
\begin{equation}
\label{eq:analyticmgf}
\limsup_{k \to \infty} \left( \frac{J_k}{k!} \right)^{1/k} < \infty,
\end{equation}
then $\mathcal X_n$ converges weakly to a random variable $\mathcal X$. Furthermore, 
$\mathbb E(\mathcal X^k) = J_k$ for every $k \in \integers_+$
and $\mathcal X$ is uniquely characterized by its moments.

Now we explain
our strategy of proof of Proposition \ref{prop:localtime}
(a similar strategy was used in \cite{NSz}).
We prove that there is a constant
$\Lambda$
depending on the $(T,T^{-1})$ transformation $F$
and the observable $H$
so that 
$\mathcal X_N = V_N/(\Lambda N^{3/2})$ satisfies \eqref{eq:moments}
(with $\mathbb E$ meaning integral w.r.t. $\mu$)
and \eqref{eq:analyticmgf} with constants $J_k$ that do not depend on 
$F$ and $H$.
Consequently, there is a random variable $\mathcal X$ so that $\mathcal X_N = V_N/(\Lambda N^{3/2})$
converges weakly to $\mathcal X$ for any $(T,T^{-1})$ transformation
satisfying the assumptions of Theorem \ref{ThCLT1}. Recall from \S \ref{subsec:RWRS} that one such example is the one dimensional random walk in random scenery,
in which case by the result of \cite{KS}, 
$\mathcal X_N = V_N/(\Lambda N^{3/2})$
converges weakly to $\mathcal L^2 $. Thus $\mathcal X=\mathcal L^2$ and it
has to be the limit for all $(T,T^{-1})$ transformation
satisfying the assumptions of Theorem \ref{ThCLT1}. It remains to 
check \eqref{eq:moments}
and \eqref{eq:analyticmgf}.

Recall that $\tau$ satisfies the MMLLT with a Gaussian density 
$\mathfrak g$. Let $\varsigma_1$ be the standard deviation of this Gaussian random variable, that is
$\mathfrak g(z) = \varphi(z/\varsigma_1)/\varsigma_1$, where $\varphi$ is the standard Gaussian
density. 

Define
\begin{equation}
\label{defLambda}
\Lambda = \frac{\sqrt \pi \varsigma_2^2 }{\varsigma_1}=\sqrt{2} \pi \varsigma_2^2 \fg(0)
\end{equation}
where $\varsigma_2^2$ is given by \eqref{eq:sigma2def}.
 
\begin{lemma}
\label{lem:firstmoment}
For $k=1$, \eqref{eq:moments} holds with 
\begin{equation}
\label{eq:moment2}
J_1 = \frac{2}{ \sqrt \pi}
\int_{w \in \reals} \int_{0 < t_1 < t_2 < 1} \frac{1}{\sqrt{t_1}}
\frac{1}{\sqrt{t_2- t_1}}
\varphi \left( \frac{w}{\sqrt{ t_1}}\right) \varphi(0)\d t_1 \d t_2 \d w .
\end{equation}
\end{lemma}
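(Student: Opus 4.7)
The plan is to compute $\mathbb{E}(V_N)$ directly using the MMLLT, recognize the leading $N^{3/2}$ term, and then check algebraically that after division by $\Lambda N^{3/2}$ the limit matches the stated $J_1$.

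First I would expand
\[
\mathbb{E}(V_N) = \sum_{n_1,n_2=0}^{N-1} \int \tH(F^{n_1}(x,y))\tH(F^{n_2}(x,y))\,d\mu\,d\nu,
\]
discard the diagonal $n_1=n_2$ (which contributes $N\mu\nu(\tH^2)=O(N)=o(N^{3/2})$), and by the symmetry $(n_1,n_2)\leftrightarrow(n_2,n_1)$ restrict attention to $n_1<n_2$ with an overall factor $2$. Using the $G$-invariance of $\nu$, the integrand for fixed $x$ reduces to $\tilde\Phi(f^{n_1}x,f^{n_2}x,\tau_{n_2}(x)-\tau_{n_1}(x))$, where $\tilde\Phi(x_1,x_2,s)=\int \tH(x_1,y)\tH(x_2,G_s y)\,d\nu(y)$. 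The cocycle identity $\tau_{n_2}-\tau_{n_1}=\tau_{n_2-n_1}\circ f^{n_1}$ plus $f$-invariance of $\mu$ then gives
\[
\mathbb{E}\big(\textstyle\int \tH\circ F^{n_1}\cdot \tH\circ F^{n_2}\,d\nu\big) \;=\; \rho_k, \qquad k:=n_2-n_1,
\]
with $\rho_k=\int \tilde\Phi(x,f^k x,\tau_k(x))\,d\mu(x)$.

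The main task is to establish the asymptotic $\sqrt{k}\,\rho_k\to \fg(0)\varsigma_2^2$. By a standard density argument it suffices to verify this when $\tH(x,y)=A(x)B(y)$, in which case $\tilde\Phi(x_1,x_2,s)=A(x_1)A(x_2)\rho^B(s)$ with $\rho^B(s)=\int B(y)B(G_s y)\,d\nu$ decaying exponentially in $|s|$ by multiple exponential mixing of $G$. Partitioning $\reals$ into unit intervals and applying the MMLLT with $m=1$ to the pair $(A(\cdot),A(f^k\cdot))$ with indicator $\one_{[w,w+1]}(\tau_k)$ yields
\[
\rho_k \;=\; \frac{1}{\sqrt k}\int_{\reals}\tilde\rho(w)\,\fg\!\left(\tfrac{w}{\sqrt k}\right)dw \;+\; o\!\left(\tfrac{1}{\sqrt k}\right),
\]
where $\tilde\rho(w)=\iint \tilde\Phi(x_1,x_2,w)\,d\mu(x_1)\,d\mu(x_2)$. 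Since $\tilde\rho$ is integrable with $\int\tilde\rho=\varsigma_2^2$ by \eqref{eq:sigma2def}, dominated convergence (as $k\to\infty$, $\fg(w/\sqrt k)\to\fg(0)$ pointwise) gives the claimed leading order.

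Plugging this back in and summing,
\[
\mathbb{E}(V_N) \;=\; o(N^{3/2})\;+\;2\fg(0)\varsigma_2^2 \sum_{k=1}^{N-1}\frac{N-k}{\sqrt k}(1+o(1)) \;=\; \tfrac{8}{3}\fg(0)\varsigma_2^2 \,N^{3/2}(1+o(1)),
\]
using the elementary estimate $\sum_{k=1}^{N-1}(N-k)/\sqrt k=\tfrac{4}{3}N^{3/2}+o(N^{3/2})$. Dividing by $\Lambda N^{3/2}$ with $\Lambda=\sqrt 2\,\pi\,\varsigma_2^2\fg(0)$ yields $\mathbb{E}(V_N)/(\Lambda N^{3/2})\to \tfrac{8}{3\sqrt 2\,\pi}$. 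To finish I would verify that the integral defining $J_1$ evaluates to exactly this number: since $\int_\reals \varphi(w/\sqrt{t_1})/\sqrt{t_1}\,dw=1$ and $\int_{0<t_1<t_2<1}(t_2-t_1)^{-1/2}\,dt_1\,dt_2=\tfrac{4}{3}$, one gets $J_1=(2\varphi(0)/\sqrt\pi)\cdot\tfrac{4}{3}=\tfrac{8}{3\sqrt 2\,\pi}$, matching the limit.

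The main obstacle is the sharp asymptotic for $\rho_k$: the MMLLT provides the density for $\tau_k$ only after multiplication by functions on $X$ and indicators of bounded cubes, so to extract $\rho_k\sim\fg(0)\varsigma_2^2/\sqrt k$ one must carefully truncate the $w$-integration (using the exponential decay of $\tilde\rho$ from multiple exponential mixing of $G$), apply the MMLLT uniformly on the truncated region, and then pass to the limit. A secondary point is reducing from general $\tH$ to product form (via $L^2$-approximation as in \cite[Lemma 3.3]{DDKN}); this is standard but must be done with care to preserve the exponential tail estimates needed for the $w$-integration and hence for the Riemann-sum passage to $\tfrac{4}{3}N^{3/2}$.
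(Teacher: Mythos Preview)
Your argument is correct. The reduction via the cocycle identity to the one-parameter family $\rho_k$, the asymptotic $\sqrt{k}\,\rho_k\to\fg(0)\varsigma_2^2$ (which is precisely the $d=1$ analogue of \eqref{Eq1Cor2}, i.e.\ \cite[Theorem 4.7]{DDKN}, so you could simply cite it rather than re-derive it), the summation $\sum_{k<N}(N-k)k^{-1/2}\sim\tfrac{4}{3}N^{3/2}$, and the explicit evaluation of the integral defining $J_1$ all check out.

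The paper, however, proceeds differently. Rather than computing both sides numerically and matching them, it partitions $X$ into small pieces $X_i$ and $\reals$ into $\eta$-intervals, writes $\mathbb{E}(V_N)$ as a multi-index sum over $(n_1,n_2,i_1,i_2,z_1,z_2)$, isolates a main term $T_1$ (subject to separation conditions (A1)--(A3)), applies the MMLLT to $\int p(x)\,d\mu$, and then recognises the resulting expression directly as a Riemann sum for the integral \eqref{eq:moment2}. No numerical value is computed along the way.

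Your route is shorter and more transparent for $k=1$, essentially because $\rho_k$ depends on a single gap and its asymptotic is already available from \cite{DDKN}. The price is that it does not generalise: for $k\ge 2$ the summand depends on the full configuration $(n_1,\dots,n_{2k})$ through the permutation ordering \emph{and} the pairing structure, and there is no single scalar sequence whose asymptotic one can look up and sum. The paper's more elaborate decomposition is designed so that exactly the same machinery proves Lemma~\ref{lem:highermoment}, where the limit $J_k$ is the complicated integral \eqref{eq:momentk} that cannot be evaluated in closed form. If you intend to continue to higher moments you will need to adopt (or reinvent) the paper's Riemann-sum framework anyway.
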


We note that the above integral can be performed explicitly. Namely 
$\DS J_1 = \frac{4 \sqrt 2}{3 \sqrt \pi}$. 
 This shows that $\Lambda$ was chosen
correctly. Indeed, in case of the simple random walk in random 
scenery,
formula (1.2) in 
\cite{KS} shows that $J_1 = \frac{4 \sqrt 2 }{3 \sqrt \pi}$
while by the main result of \cite{KS}, 
the conclusion of Proposition
\ref{prop:localtime} holds for simple random walk in random scenery.
\begin{proof}[Proof of Lemma \ref{lem:firstmoment}]
Fix some $\eps >0$. We need to show that for $N$ large enough,
$$
\left| 
N^{-3/2} \Lambda^{-1}
\iint \left[ \sum_{n=1}^N \tH \circ F^n (x,y) \right]^2 \d \mu \d \nu - J_1 \right| <
\eps.
$$
In the following proof,
we will choose a small 
$\eta = \eta(\eps)$, a large $L = L(\eps)$ a small 
$\eta'\!\!=\!\! \eta' (\eps, \eta, L)\!\!<\!\! \eta$ and
finally $N = N(\eps, \eta,L, \eta')$. There will be finitely many restrictions on these parameters, 
we can choose the most strict one. 

Choose a partition of $X$ into sets $X_i, i\in I$
of diameter $< \eta$ and fix some elements $x_i \in X_i$.
We have
$$
\iint \left[ \sum_{n=1}^N \tH \circ F^n (x,y) \right]^2 \d \mu \d \nu =
$$
\begin{equation}
\label{eq:sum1dim1}
 \sum_{n_1, n_2 = 1}^N \sum_{i_1,i_2 \in I}
\sum_{z_1,z_2 \in \integers} \int 
p(x) \int \tH (f^{n_1}(x),G_{\tau_{n_1}(x)} (y)) 
\tH (f^{n_2}(x),G_{\tau_{n_2}(x)} (y)) \d\nu \d\mu ,
\end{equation}
where
$$
p(x) = p_{\eta,n_1,n_2,i_1,i_2,z_1,z_2}(x) = 
\one_{f^{n_1}x \in X_{i_1}}
\one_{f^{n_2}x \in X_{i_2}}
\one_{\tau_{n_1}(x) \in [z_1, z_1 + 1]\eta}
\one_{\tau_{n_2}(x) \in [z_2, z_2 + 1]\eta}.
$$
Split the sum in \eqref{eq:sum1dim1} as $T_1 + T_2$, where $T_1$
corresponds to the terms satisfying
\begin{itemize}
\item[(A1)]
$\DS \eta' N < \min\{ n_1, n_2\} < \min\{ n_1, n_2\} + \eta' N < \max \{ n_1, n_2\}
$
\item[(A2)]$|z_1| < L \sqrt N /\eta$
\item[(A3)] $|z_1 - z_2| < L / \eta$.
\end{itemize} and $T_2$ stands for the terms where 
 at least one of the conditions (A1)--(A3) 
is violated.
 Write $T_1 = \widetilde\sum (...)$.

We start by estimating $T_1$. Let us write $a_N \approx b_N$ if there are 
constants $\eta$, $L$ and $\eta'$ so that for $N$ large enough,
$|a_N -b_N | < \eps  \Lambda N^{3/2}  / 10$. 
We claim that
  \begin{equation}
\label{eq:T_1}
T_1 \approx 
\widetilde\sum 
 \int 
p(x) \d\mu  \int \tH (x_{i_1},G_{z_1 \eta} (y)) 
\tH (x_{i_2},G_{z_2 \eta} (y)) \d\nu .
\end{equation}
{Indeed, by the continuity of $H$ we can choose 
$\eta$ so small that the difference between the LHS and the RHS of 
\eqref{eq:T_1} does not exceed
$$ \frac{\eps  \Lambda}{1000} \widetilde\sum  \int
p_{\eta,n_1,n_2,i_1,i_2,z_1,z_2}(x)  \d\mu(x) ,$$
so 
\eqref{eq:T_1} follows from the 
anticoncentration large deviation bound.

Next,}
writing $m_1 = \min\{ n_1, n_2\} $, $m_2 = \max\{ n_1, n_2\} $
and using the MMLLT, we find
\begin{equation}
\label{T1-LLTSum}
T_1 \approx 
\frac{1}{\varsigma_1^2} \widetilde \sum \eta^2
\frac{1}{\sqrt{m_1}} \varphi \left( \frac{z_1 \eta}{\varsigma_1 \sqrt{m_1}}\right) 
\frac{1}{\sqrt{m_2 - m_1}} 
\varphi \left( \frac{(z_2 - z_1) \eta}{\varsigma_1 \sqrt{m_2 - m_1}}\right)
\mu(X_{i_1}) \mu(X_{i_2})\times
\end{equation}
$$
\int \tH (x_{i_1},G_{z_1 \eta} (y)) \tH (x_{i_2},G_{z_2 \eta} (y)) \d\nu .
$$
 Noting that  by (A1) and (A3),
$(z_2 - z_1) \eta / [\varsigma_1 \sqrt{m_2- m_1}] = O(N^{-1/2})$,
we see that $\DS \varphi \left( \frac{(z_2 - z_1) \eta}{\varsigma_1 \sqrt{m_2 - m_1}}\right)$
can be replaced by $\varphi(0)$ in \eqref{T1-LLTSum} without invalidating $\approx$.
Then the only 
term depending on $z_2$ is under the integral with respect to $\nu$. We
can approximate the sum over $z_2$ by a Riemann integral:
\begin{equation}
\label{Cor-SumInt}
\eta \sum_{z_2 = z_1 - L /\eta }^{z_1 + L /\eta } 
\int \tH (x_{i_1},G_{z_1 \eta} (y)) \tH (x_{i_2},G_{z_2 \eta} (y)) \d\nu =
\cI_{i_1,i_2}
+o_{ \eta\to 0, L\to \infty, N \to \infty} (1) 
\end{equation}
where
$$
\cI_{i_1,i_2}=
\int_{ \reals} \int \tH(x_{i_1}, y) \tH(x_{i_2}, G_t (y)) \d \nu \d t .
$$
Combining \eqref{T1-LLTSum} and \eqref{Cor-SumInt},
we conclude
\begin{eqnarray*}
T_1 &\approx& \frac{2}{\varsigma_1^2}
\left[\sum_{i_1,i_2} \mu(X_{i_1}) \mu(X_{i_2}) \cI_{i_1,i_2} \right] \times \\
&&\times
\left[  \sum_{\eta' N < m_1 < m_1 + \eta' N < m_2 <N}
\sum_{|z_1| < L \sqrt N/\eta} \eta
\frac{1}{\sqrt{m_1}} \varphi \left( \frac{z_1 \eta }{\varsigma_1 \sqrt{m_1}}\right) 
\frac{1}{\sqrt{m_2 - m_1}}  \varphi(0) \right],
\end{eqnarray*}
where the factor $2$ comes from taking into account both $n_1 < n_2$ and $n_2 < n_1$.
Further decreasing $\eta$ and increasing $L$ if necessary
and recalling \eqref{eq:sigma2def}, we can replace the 
first sum by $\varsigma_2^2$.

Now replacing  two Riemann sums with the corresponding Riemann integrals, 
we obtain 
$$
T_1 \approx
\frac{2 \varsigma_2^2}{\varsigma_1^2} N^{3/2}
\int_{|w| < L} \int_{\eta' < t_1 < t_1 + \eta' < t_2 < 1} \frac{1}{\sqrt{t_1}}
\frac{1}{\sqrt{t_2- t_1}}
\varphi \left( \frac{w}{\varsigma_1\sqrt{ t_1}}\right) \varphi(0) \d t \d w
\approx \frac{\varsigma_2^2}{\varsigma_1} 
 \sqrt \pi N^{3/2} J_1.
$$
In order to complete the proof of 
\eqref{eq:moment2}, it suffices to show that
for $L$ large and for $\eta, \eta'$ small
\begin{equation}
\label{eq:T_2}
|T_2 | < \eps N^{3/2} / 10.
\end{equation}
We will estimate the contribution of the terms where exactly one of the 
assumptions (A1)--(A3) fail (the cases when more than one fail are similar and easier).

Suppose that (A1) fails and, moreover, $m_1<\eta' N$
(the other cases are similar). Using the anticoncentration large deviation bounds instead of the
MMLLT, we obtain that the corresponding sum is bounded by
$$
\sum_{m_1=1}^{\eta' N} \sum_{m_2=0}^{N} \frac{1}{\sqrt{m_1}} \frac{1}{\sqrt{m_2 +1}}
\sum_{|z_1| < L \sqrt N /\eta} C(\eta, L) \leq \sqrt{\eta'} N C'(\eta, L) \sqrt N < \eps N^{3/2} /10
$$ 
if $\eta'$ is small enough.

Let us assume that (A2) fails. Then again using the anticoncentration large deviation bounds instead of the
MMLLT, we obtain that the corresponding sum is bounded by
$$
\sum_{m_1 = \eta' N}^N 
\sum_{m_2 = m_1 + \eta' N}^N
 \frac{1}{\sqrt{m_1}} \frac{1}{\sqrt{m_2 - m_1}}
\sum_{|z_1| \geq  L \sqrt N /\eta} C \eta 
\Theta(|z_1| \eta / \sqrt{m_1}) $$
$$
 \leq 
C N \sum_{|z_1| \geq  L \sqrt N /\eta} \eta 
\Theta(|z_1| \eta / \sqrt{N}) \leq C N^{3/2} \int_{L-1}^{\infty} \Theta (r) \d r
 < \eps N^{3/2} /10
$$ 
for $L$ large. 

Finally assume that (A3) fails. Then we use the exponential mixing of $G$ to derive
$$
\eta \sum_{z_2:  |z_2 - z_1| > L /\eta }
\int \tH (x_{i_1},G_{z_1 \eta} (y)) \tH (x_{i_2},G_{z_2 \eta} (y)) \d\nu  $$
$$
\leq 2
\int_{L}^{\infty} \int \tH(x_{i_1}, y) \tH(x_{i_2}, G_t (y)) \d \nu \d t 
\leq C e^{-cL}.
$$
Proceeding as in the case of $T_1$ we obtain that the sum corresponding to indices when (A3) is violated is 
bounded by $CN^{3/2} e^{-cL} < \eps N^{3/2} / 10$. 
This completes the proof of the lemma.
\end{proof}

\begin{lemma}
\label{lem:highermoment}
For any $k\geq 2$, \eqref{eq:moments} holds with 
\begin{equation}
\label{eq:momentk}
J_{k} = \left[\frac{2}{\sqrt \pi} \right]^k \int_{w_1,...,w_k \in \mathbb R}
\int_{0 < t_1 < ... < t_{2k} < 1}
\prod_{j=1}^{2k} \frac{1}{\sqrt{t_j - t_{j-1}}}
\sum_{v \in V} 
\prod_{j=1}^{2k} \varphi
\left( \frac{ w_{v(j)} - w_{v(j-1)}}{ \sqrt{t_j - t_{j-1}}}
\right) \d t \d w,
\end{equation}
where $V$ is the set of all two-to-one mappings from $\{1,2,...,2k\}$ to $\{1,2,...,k\}$
(that is, $|v^{-1}(l)| = 2$ for all $l=1,...,k$).
\end{lemma}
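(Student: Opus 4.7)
The plan is to extend the argument of Lemma~\ref{lem:firstmoment} to the $k$-th moment, the main new ingredient being the combinatorial analysis that produces the sum over two-to-one maps $v$. First, expand
$$\mu(V_N^k)=\sum_{n_1,\dots,n_{2k}=1}^N \int_X\int_{Y^k}\prod_{j=1}^{k}\tH(F^{n_{2j-1}}(x,y_j))\tH(F^{n_{2j}}(x,y_j))\,d\nu^{\otimes k}\,d\mu,$$
where $(n_{2j-1},n_{2j})$ is paired with $y_j$. Partition $X$ into cells $X_{i_l}$ of diameter $<\eta$ and $\reals$ into $\eta$-intervals indexed by $z_l\in\integers$, and replace each $\tH$-value by its cell representative $\tH(x_{i_l},G_{z_l\eta}(y_{\lceil l/2\rceil}))$, paying a small error controlled by the modulus of continuity of $H$ and the anticoncentration large deviation bound, exactly as in Lemma~\ref{lem:firstmoment}.

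Next, identify the dominant configurations: after sorting $n_1,\dots,n_{2k}$ as $m_1<\cdots<m_{2k}$, require all gaps $m_j-m_{j-1}\geq \eta'N$, all $|z_l|\leq L\sqrt{N}/\eta$, and within each pair $\{2j-1,2j\}$ that $|z_{2j-1}-z_{2j}|\leq L/\eta$. On dominant configurations, the MMLLT applied with $m=2k-1$ gives the joint density of $(\tau_{m_1},\dots,\tau_{m_{2k}})$ as a product of Gaussian increments with variances proportional to $m_j-m_{j-1}$. For each $y_j$-integral, $G$-invariance of $\nu$ together with exponential fiber mixing yields
$$\eta\sum_{z_{2j}:\,|z_{2j}-z_{2j-1}|\leq L/\eta}\int \tH(x_{i_{2j-1}},G_{z_{2j-1}\eta}y)\tH(x_{i_{2j}},G_{z_{2j}\eta}y)\,d\nu(y)\approx\cI_{i_{2j-1},i_{2j}},$$
up to an $o_{L\to\infty,\eta\to 0}(1)$ error, exactly as in \eqref{Cor-SumInt}.

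The key combinatorial step is re-parameterization. After the fiber integrals collapse one scenery variable per pair, only $k$ free variables $w_1,\dots,w_k$ remain, one per $y$-label. Each sorted time position $j$ inherits a label $v(j)\in\{1,\dots,k\}$, and the map $v\colon\{1,\dots,2k\}\to\{1,\dots,k\}$ is automatically two-to-one. The $j$-th Gaussian factor from the MMLLT then becomes $\varphi((w_{v(j)}-w_{v(j-1)})/(\varsigma_1\sqrt{t_j-t_{j-1}}))/\sqrt{t_j-t_{j-1}}$ with $t_j=m_j/N$. Each ordered tuple $(n_1,\dots,n_{2k})$ determines a sorted sequence together with a unique two-to-one map $v$, and conversely each pair $(m_1<\cdots<m_{2k},v)$ arises from exactly $2^k$ ordered tuples (from swapping within pairs). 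Summing over $v$ and converting the Riemann sums over $w_l$ and $t_j$ to integrals, with each of the $k$ fiber integrals contributing a factor matching the $k=1$ computation after division by $\Lambda$, yields \eqref{eq:momentk}; the constant $[2/\sqrt{\pi}]^k$ combines the $2^k$ above with $k$ copies of the $1/\sqrt{\pi}$ from $\Lambda=\sqrt{\pi}\,\varsigma_2^2/\varsigma_1$.

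The hardest part is bounding the non-dominant contributions via three estimates generalizing the $T_2$ bound of Lemma~\ref{lem:firstmoment}: (i) if some $m_j-m_{j-1}<\eta'N$, replace the MMLLT by the anticoncentration large deviation bound, obtaining a factor of order $\sqrt{\eta'}$ relative to the dominant term; (ii) if some $|z_l|>L\sqrt{N}/\eta$, use the tail decay $\int_L^\infty \Theta(r)\,dr\to 0$; (iii) if some within-pair separation exceeds $L/\eta$, use the exponential fiber mixing to bound the corresponding $y_j$-integral by $Ce^{-cL}$. The bookkeeping is heavier than in Lemma~\ref{lem:firstmoment} because we now track $2k$ time indices and $k$ coupled fiber integrals simultaneously, but choosing the parameters in the order $\eta'\to 0$, $L\to\infty$, $\eta\to 0$, $N\to\infty$, each error can be made arbitrarily small, completing the proof.
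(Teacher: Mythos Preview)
Your proposal follows essentially the same route as the paper's proof: expand the $k$-th moment, partition $X$ and $\reals$, isolate dominant configurations via the conditions (A1')--(A3'), apply the MMLLT to produce the product of Gaussian increments, collapse one scenery variable per pair via the fiber correlation integral to recover $\varsigma_2^{2k}$, and then carry out the combinatorial reparameterization by the two-to-one map $v$ together with the $2^k$ multiplicity. The treatment of the non-dominant terms also mirrors the paper's three cases.

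One genuine slip: your stated order of choosing parameters, ``$\eta'\to 0$, $L\to\infty$, $\eta\to 0$, $N\to\infty$'', is inverted. In case (i), when some gap $m_j-m_{j-1}<\eta'N$, the anticoncentration bound yields a term of size $\sqrt{\eta'}\,C(\eta,L)\,N^{3k/2}$, where $C(\eta,L)$ comes from summing over the remaining $z$-variables on the window $|z|<L\sqrt{N}/\eta$. This forces $\eta'$ to be chosen \emph{after} $\eta$ and $L$, not before. The correct dependency (exactly as in the proof of Lemma~\ref{lem:firstmoment}) is: first $\eta=\eta(\eps)$ small and $L=L(\eps)$ large, then $\eta'=\eta'(\eps,\eta,L)$ small, and finally $N$ large. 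With that correction, your argument goes through and matches the paper's.
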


\begin{proof}[Proof of Lemma \ref{lem:highermoment}]
We follow the strategy of the proof of Lemma \ref{lem:firstmoment}.

Fix some $\eps >0$. We need to show that for $N$ large enough,
$$
\left| 
N^{-3k/2} \Lambda^{-k}
\int \left( \int \left[ \sum_{n=1}^N \tH \circ F^n (x,y) \right]^2 \d \nu \right)^k \d \mu - J_k \right| <
\eps.
$$
Now we have
\begin{eqnarray}
&&\int \left( \int \left[ \sum_{n=1}^N \tH \circ F^n (x,y) \right]^2 \d \nu \right)^k \d \mu \nonumber\\
&=& \sum_{n_1, n_2, ..., n_{2k} = 1}^N \sum_{i_1,i_2 ..., i_{2k} \in I}
\sum_{z_1,z_2, ..., z_{2k} \in \integers}  \label{eq:sum2dim1} \\
&&\int 
p(x) 
\left[ \prod_{j=1}^k
\int \tH (f^{n_{2j-1}}(x),G_{\tau_{n_{2j-1}}(x)} (y)) 
\tH (f^{n_{2j}}(x),G_{\tau_{n_{2j}}(x)} (y)) \d\nu \right]\d\mu \nonumber
\end{eqnarray}
where
$$
p(x) = p_{\eta,\underline n ,\underline i, \underline z}(x) = 
\left[ \prod_{j=1}^{2k}
\one_{f^{n_j}x \in X_{i_j}} \right]
\left[ \prod_{j=1}^{2k}
\one_{\tau_{n_j}(x) \in [z_j, z_j + 1]\eta}
\right].
$$
Let us order the numbers $n_1,...,n_{2k}$ as $m_1 \leq m_2 \leq ... \leq m_{2k}$ and denote $m_0 = 0$.
As before, we write the sum in \eqref{eq:sum2dim1} as $T_1 + T_2$, where $T_1$
corresponds to the terms satisfying
\begin{itemize}
\item[(A1')] for every $j=1,...,2k$,
$m_j - m_{j-1} > \eta'N $
\item[(A2')] $|z_{2j-1}| < L \sqrt N /\eta$ for all $j=1,...,k$
\item[(A3')] $|z_{2j-1} - z_{2j}| < L / \eta$ for all $j=1,...,k$
\end{itemize} and write $T_1 = \widetilde \sum (...)$. 
As in \eqref{eq:T_1},
\begin{equation}
\label{eq:T_1'}
T_1 \approx 
\widetilde \sum 
 \int 
p(x) \d\mu \prod_{j=1}^k \int \tH (x_{i_{2j-1}},G_{z_{2j-1} \eta} (y)) 
\tH (x_{i_{2j}},G_{z_{2j} \eta} (y)) \d\nu ,
\end{equation}
where $\approx$ now means that the difference between the LHS and the RHS an be made smaller than
${\eps \Lambda^k N^{3k/2}}/{10}$ provided that $\eta$ and $\eta'$ are small enough and $L$ and $N$ are 
large enough.
To compute $\int p(x) d \mu$, we use the MMLLT for times $m_1 < m_2 < ... < m_{2k}$. 
Note however that the range of summation for different $z_j$'s are different and so it is 
important to keep track of the index $i$ so that $n_i = m_j$. To this end, 
define to permutation $\rho$ (uniquely defined by the tuple $(n_1,...,n_{2k})$
if (A1') holds)
so that $m_j = n_{\rho(j)}$.
Writing $\rho(0) = 0$ and $z_0= 0$ and using the MMLLT we obtain
\begin{eqnarray}
T_1 &\approx &
\frac{1}{\varsigma_1^{2k}} \widetilde \sum \eta^{2k}
\left[
\prod_{j=1}^{2k} \frac{1}{\sqrt{m_{j} - m_{j-1}}} 
\varphi \left( \frac{(z_{\rho(j)} - z_{\rho(j-1)}) \eta}{\varsigma_1 \sqrt{m_{j} - m_{j-1}}}\right) 
\right] \label{eq:sum3dim1} \times\\
&& \left[ \prod_{j=1}^{2k} \mu (X_{i_j})\right]
\left[ \prod_{j=1}^k
\int \tH (x_{i_{2j-1}},G_{z_{{2j-1}}} (y)) 
\tH (x_{i_{2j}},G_{z_{{2j}}} (y))  \d\nu \right] \label{eq:sum4dim1}.
\end{eqnarray}
By (A1') and (A3'), 
$\rho(l)$ can be replaced by $2 \lceil \rho(l) / 2 \rceil -1$ (the biggest odd integer not bigger than
$\rho(j)$) for $l=j-1,j$ in the subscripts of $z$
in \eqref{eq:sum3dim1}. Consequently, $z_{2j}$ only appears 
in \eqref{eq:sum4dim1}. As before, we compute
$$
\sum_{i_1,...,i_{2k}}
\left[ \prod_{j=1}^{2k} \mu (X_{i_j})\right]
\eta^k \sum_{\substack{z_2, z_4, ..., z_{2k}:\\ |z_{2j} - z_{2j -1}| < L/\eta}}
\left[ \prod_{j=1}^k
\int \tH (x_{i_{2j-1}},G_{z_{{2j-1}}} (y)) 
\tH (x_{i_{2j}},G_{z_{{2j}}} (y))  \d\nu \right] $$$$
= \varsigma_2^{2k}+o(1).
$$
Thus we arrive at
$$
T_1 \approx
\frac{\varsigma_2^{2k}}{\varsigma_1^{2k}} \widehat \sum \eta^{k}
\left[
\prod_{j=1}^{2k} \frac{1}{\sqrt{m_{j} - m_{j-1}}} 
\varphi \left( \frac{(z_{2 \lceil \rho(j) / 2 \rceil -1} - z_{2 \lceil \rho(j-1) / 2 \rceil -1}) \eta}{\varsigma_1 \sqrt{m_{j} - m_{j-1}}}\right) 
\right],
$$
where $\widehat \Sigma$ refers to the sum for $n_1,...,n_{2k}$, $z_1, z_3,...,z_{2k-1}$, satisfying (A1'), (A3').
Next, observe that $u: \{1,...,2k\} \to \{1,3...,2k-1\}$
defined by $u(j) = 2 \lceil \rho(j) / 2 \rceil -1$ is a two-to-one mapping. 
Let $U$ be the set of all such mappings. 
The summand in the last displayed formula only depends on $(n_1,...,n_{2k})$
through $(m_1,...,m_{2k})$ and $u$. Furthermore, for any given $(m_1,...,m_{2k})$ and $u$, 
there are exactly $2^k$ corresponding tuples $(n_1,...,n_{2k})$. Thus
$$
T_1 \approx 2^k
\frac{\varsigma_2^{2k}}{\varsigma_1^{2k}} 
\sum_{u \in U} 
\sum_{\substack{m_1,...m_{2k}\\ \text{satisfying (A1')}}}
\sum_{\substack{z_1,..,z_{2k-1} \\ \text{satisfying (A3')}}}
\eta^{k}
\left[
\prod_{j=1}^{2k} \frac{1}{\sqrt{m_{j} - m_{j-1}}} 
\varphi \left( \frac{(z_{u(j)} - z_{u(j-1)}) \eta}{\varsigma_1 \sqrt{m_{j} - m_{j-1}}}\right) 
\right].
$$
Now we are ready to replace the last two sums (Riemann sums) 
by the corresponding Riemann integral with $t_j \sim m_j / N$ and 
$w_l \sim z_{2l-1} \eta / \sqrt N$. To simplify the notation a little, we denote 
$v(j) = (u(j) + 1) /2$, thus $v$ is a 2-to-1 mapping from $\{1,2,...,2k\}$ to $\{1,2,...,k\}$
(the set of all such mapping is denoted by $V$). We obtain
\begin{eqnarray*}
T_1 &\approx& 2^k
\frac{\varsigma_2^{2k}}{\varsigma_1^{2k}} N^{3k/2}
\sum_{v \in V} \\
&&
\int_{|w_1|,...,|w_k| < L}
\int_{\substack{0 =t_0 < t_1 < ... < t_{2k} < 1: \\ t_j - t_{j-1} < \eta'}}
\prod_{j=1}^{2k} \frac{1}{\sqrt{t_j - t_{j-1}}}
\prod_{j=1}^{2k} \varphi
\left( \frac{w_{v(j)} - w_{v(j-1)}}{ \varsigma_1 \sqrt{t_j - t_{j-1}}}
\right) \d t\d w.
\end{eqnarray*}
Substituting $w$ to $w/\varsigma_1$ in the above integrals, we obtain
$$
T_1 \approx
\frac{\varsigma_2^{2k}}{\varsigma_1^{k}}
 \pi^{k /2} N^{3k/2} J_k.
$$
As in Lemma \ref{lem:firstmoment}, $T_2$ is negligible.
This completes the proof of Lemma \ref{lem:highermoment}.
\end{proof}

To finish the proof of Proposition \ref{prop:localtime},
it remains to verify \eqref{eq:analyticmgf}. To this end, first note that since $\varphi$ decays quickly,
there is a constant $M$ so that for every $v \in V$ and every $t_1,...,t_{2k} \in (0,1]$,
$$
\int_{\reals^k} \prod_{j=1}^{2k} \varphi
\left( \frac{w_{v(j)} - w_{v(j-1)}}{ \varsigma_1 \sqrt{t_j - t_{j-1}}}
\right) \d w < M^k .
$$
Noting that $|V| = (2k)! / 2^k$, we have
$$
|J_{k}| \leq C^k (2k)! \int_{0 < t_1 < ... < t_{2k} < 1}
\prod_{j=1}^{2k} \frac{1}{\sqrt{t_j - t_{j-1}}} \d t.
$$
The above integral can be computed explicitly and is equal to 
$\DS \frac{\Gamma(1/2)^{2k}}{\Gamma(k+1)}=
\frac{\pi^k}{k!}.$ 
Noting that $(2k)! / (k!)< C^k k!$, we find that 
$\DS
|J_{k}| \leq C^k k!
$
whence \eqref{eq:analyticmgf} follows. This completes
the proof of Proposition \ref{prop:localtime}.

\section{Polynomially mixing flows in dimension one.}
\label{ScPoly}
In this section, we extend the result of Theorem \ref{ThCLT1} 
to some flows with polynomial mixing rates. 
We use the setting of \cite{D04}. Recall that a flow $G_t$ is called {\em partially hyperbolic} if
there is a $G_t$ invariant splitting $TY=E_u\oplus E_{cs}$ and positive constants 
$C_1, C_2, \lambda_1, \lambda_2$ such that for each $t\geq 0$

(i) $\|dG_{-t}|E_u\|\leq C_1 e^{-\lambda_1 t};$

(ii) For any unit vectors $v_u\in E_u,$  $v_{cs}\in E_{cs}$ we have
$\|dG_{t}|v_{cs} \|\leq C_2 e^{-\lambda_2 t} \|dG_t v_u\|.$ 

For partially hyperbolic flows the leaves of $E_u$ are tangent to the leaves of an absolutely 
continuous foliation $W^u.$ 

Fix constants $R, \brv, \brC_1, \alpha_1$. Let $\fA(R, \brv, \brC_1, \alpha_1)$ 
denote the collection of 
sets $D$ which belong to one leaf of $W^u$ and satisfy 
$$\diam(D)\leq R, \quad \mes(D)\geq \brv, \quad 
\mes(\partial_\eps D)\leq \brC_1 \eps^{\alpha_1} $$
for all $\eps > 0 $,
where $\partial_\eps D$ is the $\eps$ neighborhood of the boundary of $D.$ 
We say that the sets from $\fA(R, \brv, \brC_1, \alpha_1)$ have {\em bounded geometry}.

Fix $\brC_2, \alpha_2>0.$ Let $\fM(R, \brv, \brC_1, \alpha_1, \brC_2, \alpha_2)$ 
denote the set of linear functionals of the form 
$$\EXP_{\ell_{D, \rho}}(A)=\int_D A(y) \rho(y) \d y, $$
where $D\in \fA(R,  \brv, \brC_1, \alpha_1),$ $\rho$ is a probability density on $D$,
$\ln \rho\in C^{\alpha_2}(D),$ 
$\|\ln \rho\|_{C^r} \leq \brC_2.$
We shall often use the existence of {\em almost Markov decomposition} established in
\cite[Section 2]{D04}: if $R$, $\brC_1,$ and $\brC_2$ are large enough and
$\alpha_1, \alpha_2$ and $\brv$ are small enough, then
given $\ell\in \fM(R, \brv, \brC_1, \alpha_1, \brC_2, \alpha_2)$ and $t>0$ we can decompose
\begin{equation}
\label{AMFlows}
 \EXP_\ell(A\circ G_t)=\sum_s c_s \EXP_{\ell_s} (A)+O\left(\|A\|_{C^0} \theta^t\right) 
\end{equation} 
with $\ell_s\in\fM(R, \brv, \brC_1, \alpha_1, \brC_2, \alpha_2)$ and 
$\DS \sum_s c_s=1+O(\theta^t)$ for some $\theta<1.$

We say that a measure is {\em u-Gibbs} if its conditional measures on unstable leaves 
have smooth densities. The existence of almost Markov decomposition implies that u-Gibbs measures belong
to the convex hull of $\fM(R, \brv, \brC_1, \alpha_1, \brC_2, \alpha_2)$  for appropriate choice of
$R, \brv, \brC_1, \alpha_1, \brC_2, \alpha_2$ (see \cite[\S 11.2]{BDV}).

From now on we shall fix constants involved in the definition of $\fM(R, \brv, \brC_1, \alpha_1, \brC_2, \alpha_2)$  which satisfies \eqref{AMFlows} and whose convex hull contains u-Gibbs
measures.
For the sake of simplicity, we will write $\fM$ instead of $\fM(R, \brv, \brC_1, \alpha_1, \brC_2, \alpha_2).$

\medskip

We say that the {\em unstable leaves become equidistributed
at rate $a(t)$} if there is $r>0$ such that for all $\ell\in \fM$
we have
\begin{equation}
\label{EquiWU}
 \left|\EXP_\ell(A\circ G_t)-\nu(A)\right|\leq a(t) \|A\|_{C^r}
\end{equation}
where $\nu$ is the reference invariant measure for $G_t.$
We note that if \eqref{EquiWU} holds with $a(t)\to 0$, then $\nu$ belongs to the convex hull 
of $\fM$, whence it is a u-Gibbs measure (in fact, it is the unique SRB measure for $G_t$,
see \cite[Corollary 2]{D04}).

\begin{theorem}
\label{ThCLT1Poly}
Theorem \ref{ThCLT1} remains valid if the assumption that $G$ enjoys exponential mixing of all 
orders is replaced by the assumption that $G$ is partially hyperbolic 
and unstable leaves become equidistributed
with rate $K t^{-\gamma}$ for some $\gamma>1$ and $K > 0$.
\end{theorem}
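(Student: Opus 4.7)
The plan is to follow the proof of Theorem \ref{ThCLT1} closely, replacing each use of exponential mixing of $G_t$ by an appropriate polynomial estimate derived from \eqref{EquiWU}. Three adjustments will be needed.

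\emph{Step 1 (Polynomial multiple mixing of $\nu$).} First, I would use the almost Markov decomposition \eqref{AMFlows} together with the fact that $\nu$ lies in the convex hull of $\fM$ to derive polynomial multiple mixing for smooth observables. Namely, for every $r$ I expect to obtain constants $C,\alpha$ so that for zero-mean $C^\alpha$ functions $A_1,\dots,A_r$,
\begin{equation*}
\left|\nu\!\left(\prod_{j=1}^r A_j\circ G_{t_j}\right)\right| \leq C\,\prod_{j=1}^r \|A_j\|_{C^\alpha}\cdot \fl^{-\gamma},
\end{equation*}
where $\fl=\min_{i\neq j}\|t_i-t_j\|$. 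The two-point case follows from \eqref{EquiWU} along the lines of \cite{D04}: one writes $\nu$ as a convex combination of leaf measures, represents the correlation with a smooth $A$ as $\EXP_{\ell(A)}(B\circ G_t)-\nu(A)\nu(B)$ for a suitable reweighted $\ell(A)\in\fM$, and applies \eqref{EquiWU}. The multi-point case follows by iterating \eqref{AMFlows}, splitting the longest gap and applying the two-point bound recursively.

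\emph{Step 2 (Polynomial variant of Proposition \ref{PrBG}).} Next, I would establish an analogue of Proposition \ref{PrBG} valid under polynomial mixing at rate $t^{-\gamma}$ with $\gamma>1$. The only structural change is that condition (b) is strengthened to a polynomial separation: for each $r\geq 3$,
\begin{equation*}
\int \fm_N^{r-1}\!\bigl[B(t,\|\fm_N\|^\kappa)\bigr]\,\d\fm_N(t) \to 0,
\end{equation*}
where $\kappa=\kappa(r,\gamma)>0$ is sufficiently small. The moment-method argument of \cite{BG} would then be reproduced with the bounds from Step 1: expanding $\EXP(\cS_N^{2k})$, pairing contributions reconstruct the Gaussian variance $\sigma^2$, while non-pairing contributions are controlled by combining the $\|\fm_N\|^{-\kappa\gamma}$ decay from multiple mixing with the polynomial separation in (b).

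\emph{Step 3 (Insertion into the proof of Theorem \ref{ThCLT1}).} The remaining arguments should adapt with minor modifications. In Lemma \ref{LmRandMes1}, the polynomial threshold $\|\fm_N\|^\kappa$ replaces $K\ln\|\fm_N\|$, while the local-time estimates of Lemma \ref{LmLocTime} (adapted to $d=1$) go through unchanged. In Proposition \ref{prop:localtime}, the defining integral \eqref{eq:sigma2def} of $\varsigma_2^2$ converges absolutely since $\gamma>1$; in Lemmas \ref{lem:firstmoment} and \ref{lem:highermoment}, the single substantive change is that when configuration (A3) fails, the exponential bound $Ce^{-cL}$ gets replaced by the polynomial tail $CL^{-(\gamma-1)}$ obtained by integrating $t^{-\gamma}$ from $L$ to $\infty$, which still tends to $0$ as $L\to\infty$.

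\emph{Main obstacle.} The hardest step will be Step 2: the BG moment method is engineered around exponential decay, and one must balance the polynomial separation exponent $\kappa$ against the combinatorial explosion of clustering patterns in $r$-tuples. The condition $\gamma>1$ is essential for absolute convergence of $\varsigma_2^2$ and of the tails in Step 3, but in Step 2 the bookkeeping must be carried out order by order, potentially allowing $\kappa$ to depend on the moment order $k$ (which is acceptable since each moment is verified separately, and one can then recover the polynomial variant of condition (b) uniformly in $r$ via Lemma \ref{LmRandMes1}).
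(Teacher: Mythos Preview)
Your approach diverges substantially from the paper's. The paper does not attempt to salvage the Bj\"orklund--Gorodnik moment method under polynomial decay; instead it replaces Proposition~\ref{PrBG} by a different CLT for leaf measures (Proposition~\ref{PrLFPoly}) proved via a Bernstein-type block argument carried out at the level of characteristic functions. The support of $\fm_N$ is partitioned into big blocks of width $N^{\beta_1}$ separated by small blocks of width $N^{\beta_2}$; the almost Markov decomposition \eqref{AMFlows} is applied at each gap to decouple successive big blocks, and a one-step Taylor expansion of $e^{i\xi T_j}$ (Lemma~\ref{LmOneBlock}) is iterated over the blocks. The hypotheses on $\fm_N$ are accordingly different from yours: rather than a polynomial version of condition~(b), the paper imposes pointwise bounds $\fm_N([t,t+1])\leq N^{\delta-1/4}$ together with the summability condition $\iint_{|t_1-t_2|>\sqrt{\ln N}}|t_1-t_2|^{-\gamma}\,\d\fm_N(t_1)\,\d\fm_N(t_2)\leq \ln^{-\kappa}N$, and these are verified in Lemma~\ref{LmRandMes1ph}. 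Your Step~3 is essentially correct and matches what the paper does for Proposition~\ref{prop:localtime}; only the pairwise correlation bound from \cite[Theorem~2]{D04} is needed there, and the substitution of $CL^{-(\gamma-1)}$ for $Ce^{-cL}$ is exactly right.

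The essential advantage of the paper's route is that it never needs multiple mixing of order~$\geq 3$: the decoupling between blocks comes directly from the geometric almost Markov structure of the partially hyperbolic flow, and only \emph{pairwise} polynomial mixing is invoked. By contrast, your Step~2 asks for a polynomial Bj\"orklund--Gorodnik theorem, which would be a genuinely new result rather than a routine adaptation. It is not clear that $\gamma>1$ alone suffices to close the moment combinatorics: for the $2k$-th moment each non-pair clustering must be killed by balancing the separation scale $\|\fm_N\|^{\kappa}$ against the decay $\|\fm_N\|^{-\kappa\gamma}$ and the mass of clustered configurations, and you have not exhibited a choice of $\kappa$ (even $k$-dependent) that makes all of these errors vanish simultaneously. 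In short, your Steps~1 and~3 are reasonable, but Step~2 is the heart of the matter and is neither carried out nor clearly feasible under the stated hypothesis; the paper sidesteps this difficulty entirely by exploiting the partially hyperbolic structure directly.
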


\begin{corollary}
Suppose that $\tau$ satisfies the assumption of Theorem \ref{ThCLT1}, 
$G_t$ is a topologically  transitive Anosov flow whose stable and unstable foliations are not jointly integrable,
and $\nu$ is the SRB measure. Then \eqref{KeSp} holds for sufficiently smooth functions.
 \end{corollary}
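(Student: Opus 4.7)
The plan is to derive the corollary from Theorem~\ref{ThCLT1Poly} by verifying its two structural hypotheses for the Anosov flow $G_t$ equipped with its SRB measure $\nu$. The first hypothesis, partial hyperbolicity, is immediate: the Anosov splitting $TY = E^u \oplus E^s \oplus \reals G$ yields $E_u := E^u$ and $E_{cs} := E^s \oplus \reals G$, and the uniform expansion of $dG_t$ on $E^u$ together with its contraction on $E^s$ give conditions (i)--(ii) with $\lambda_1, \lambda_2$ the Anosov exponents. The strong unstable foliation $W^u$ is absolutely continuous and $\nu$ is $u$-Gibbs, so the framework of $\fM$-functionals introduced before Theorem~\ref{ThCLT1Poly} applies.

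The substantive step is to establish the quantitative equidistribution \eqref{EquiWU} with $a(t) = Kt^{-\gamma}$ for some $\gamma > 1$. The non-joint integrability of the stable and unstable foliations, combined with topological transitivity, implies by Plante's theorem that $(G_t,\nu)$ is mixing. To upgrade this qualitative mixing to a quantitative bound for $\ell \in \fM$, I would use a thickening/coupling argument: given $\ell = \ell_{D,\rho}$, the bounded geometry of $D$ and the local product structure of $\nu$ allow $\ell$ to be coupled to a smooth measure $\tilde\ell = \phi\, d\nu$ supported in a thin center-stable neighborhood of $D$, with $\|\phi\|_{C^r}$ controlled uniformly over $\fM$. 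Then
\begin{equation*}
\EXP_\ell(A\circ G_t) = \int A \cdot (\phi \circ G_{-t})\, d\nu + \text{coupling error},
\end{equation*}
where the coupling error decays exponentially by stable contraction, and the main correlation integral tends to $\nu(A)\nu(\phi) = \nu(A)$ at the rate of decay of correlations of $\nu$ against $C^r$ observables. Under the stated hypotheses, the Dolgopyat--Liverani--Tsujii theory of quantitative mixing for Anosov flows supplies this decay at at least a polynomial (and in many settings super-polynomial or exponential) rate, giving $a(t) = Kt^{-\gamma}$ with $\gamma > 1$ once $r$ is large enough; this fixes the meaning of ``sufficiently smooth''.

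The main obstacle is pinning down this polynomial rate unconditionally from only the qualitative non-integrability hypothesis. If one does not wish to invoke an external quantitative mixing theorem, a self-contained route is to exploit the almost Markov decomposition \eqref{AMFlows} of \cite{D04}: apply it at time $t/2$ to rewrite $\EXP_\ell(A\circ G_t)$ as a convex combination $\sum_s c_s \EXP_{\ell_s}(A\circ G_{t/2})$ up to an error $O(\theta^{t/2})$, and then use mixing of $\nu$ on each piece to drive $\EXP_{\ell_s}(A\circ G_{t/2})$ close to $\nu(A)$. This bootstrap converts qualitative mixing into a polynomial rate, completing the verification of \eqref{EquiWU} and hence the corollary via Theorem~\ref{ThCLT1Poly}.
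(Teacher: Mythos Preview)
Your overall plan---reduce the corollary to Theorem~\ref{ThCLT1Poly} by checking partial hyperbolicity and the polynomial equidistribution rate \eqref{EquiWU}---is exactly right, and the partial hyperbolicity verification is fine. The paper's own argument for the equidistribution step is a single citation: \cite[Theorem~3]{D98} shows directly that for a topologically transitive Anosov flow with non-jointly-integrable stable/unstable foliations, unstable leaves equidistribute toward the SRB measure on $C^r$ at rate $Kt^{-\gamma(r)}$ with $\gamma(r)\to\infty$. So one simply takes $r$ large enough that $\gamma(r)>1$ and invokes Theorem~\ref{ThCLT1Poly}. No thickening or bootstrap is needed.

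Your approach (a) (thicken $\ell$ in the center--stable direction and reduce to decay of correlations against smooth observables) would also lead back to \cite{D98}, since that is precisely the reference giving polynomial decay for general (non-contact, non-volume-preserving) Anosov flows under the non-integrability hypothesis; the Liverani and Tsujii results you allude to require a contact or volume structure not assumed here. So this route works, but it is an unnecessary detour given that \cite[Theorem~3]{D98} already states the bound for $\ell\in\fM$ directly.

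Your approach (b), however, has a genuine gap. Applying the almost Markov decomposition at time $t/2$ yields $\EXP_\ell(A\circ G_t)=\sum_s c_s\,\EXP_{\ell_s}(A\circ G_{t/2})+O(\theta^{t/2})$, but the quantity $\EXP_{\ell_s}(A\circ G_{t/2})-\nu(A)$ is exactly an instance of the estimate you are trying to prove, with the same class of functionals $\ell_s\in\fM$. Qualitative mixing of $\nu$ gives no rate for these terms, so the iteration does not close: you get $a(t)\le \sup_s a_s(t/2)+O(\theta^{t/2})$ with no control on $a_s(t/2)$ beyond $o(1)$. This cannot manufacture a polynomial rate from qualitative mixing alone; some spectral or Dolgopyat-type input (which is what \cite{D98} provides) is genuinely required.
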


Indeed, according to \cite[Theorem 3]{D98}, for Anosov flows, unstable leaves are 
equidistributed on $C^r$  at rate $K t^{-\gamma(r)}$ where $\gamma(r)\to\infty$ as $r\to\infty.$

The proof of Theorem \ref{ThCLT1Poly} requires a modification of Proposition \ref{PrBG}.
We shall use the same notation as in that proposition. 

\begin{proposition}
\label{PrLFPoly}
Suppose that $G_t$ is partially hyperbolic 
with unstable leaves  equidistributed
at rate $K t^{-\gamma}$ for some $\gamma>1.$
Suppose that for some $0<\delta<1/20$ such that 
\begin{equation}
\label{GammaDelta}
\gamma-1>\frac{16\delta}{1-20\delta} 
\end{equation}
and $\kappa>0$ we have\medskip

(a)  $\fm_N( t \in \reals: |t| \leq N^{(1/2)+\delta}) \leq N^{1/4 + \delta}$ and 
 $\fm_N( t: |t| > N^{(1/2)+\delta}) =0$;

(b) for every $t \in \reals$, $\fm_N([t, t+1])\leq N^{\delta-(1/4)};$

(c) $\DS \iint_{|t_1-t_2|>\sqrt{\ln N}} \frac{\d\fm_{N}(t_1)\d\fm_{N}(t_2)}{|t_1-t_2|^\gamma}
\leq \ln^{-\kappa} N;$

(d) $\DS \lim_{N\to\infty} \nu(\cS_N^2)=1.$

Then $\cS_N\Rightarrow \cN(0,1)$ as $N\to \infty.$ 
\end{proposition}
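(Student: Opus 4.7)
The plan is to prove Proposition \ref{PrLFPoly} by the method of moments, following the same philosophy as the proof of Proposition \ref{PrBG} in \cite{BG} but with the exponential multiple mixing bound replaced by a polynomial multiple-mixing estimate derived iteratively from the almost Markov decomposition \eqref{AMFlows}. Concretely, since the $A_{t,N}$ are uniformly bounded in $C^1$, it suffices to prove that
$$ \lim_{N\to\infty} \nu(\cS_N^{2k}) = (2k-1)!! \qquad \text{and} \qquad \lim_{N\to\infty} \nu(\cS_N^{2k+1})=0 $$
for every $k \in \naturals$, since then Carleman's criterion yields weak convergence to $\cN(0,1)$.

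The first step is to expand the moment
$$ \nu(\cS_N^{m}) = \int\!\!\cdots\!\!\int \nu\Big(\prod_{j=1}^{m} A_{t_j,N}(G_{t_j} \cdot)\Big)\, d\fm_N(t_1)\cdots d\fm_N(t_m), $$
and to establish a polynomial analog of multiple mixing: for $\ell \in \fM$, zero-$\nu$-mean $C^r$ functions $B_1,\dots,B_m$ and ordered times $s_1<\dots<s_m$ one has, by iterated application of \eqref{AMFlows} starting from the largest time and comparing with the $\nu$-integral using \eqref{EquiWU},
$$ \Big|\EXP_\ell\!\Big(\prod_j B_j\circ G_{s_j}\Big)\Big| \leq C \Big(\prod_j \|B_j\|_{C^r}\Big) \sum_{i=1}^{m-1} (s_{i+1}-s_i)^{-\gamma}, $$
possibly after grouping consecutive close times. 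A similar bound applies with $\nu$ in place of $\EXP_\ell$ (since $\nu$ is u-Gibbs and hence in the convex hull of $\fM$).

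Second, I would partition the domain of integration into configurations parametrized by the induced pairing structure. Given $(t_1,\dots,t_{2k})$, reorder as $s_1\le\dots\le s_{2k}$ and call a configuration \emph{good} if it clusters into $k$ disjoint pairs such that within-pair separations are $\le \sqrt{\ln N}$ and between-pair separations are $>\sqrt{\ln N}$; otherwise \emph{bad}. For good configurations, iterated application of the polynomial multiple mixing estimate factorizes the integral over pairs up to an error $O(\ln^{-\gamma/2}N)$; each pair contributes a quantity which, when integrated against $\fm_N\otimes \fm_N$ and summed, approaches $\nu(\cS_N^2)\to 1$ by (d). The $(2k-1)!!$ pairings of $\{1,\dots,2k\}$ then produce the correct Gaussian moment, and the odd moments vanish since no pairing exists.

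The main obstacle is controlling the bad configurations. Here conditions (a)--(c) enter: (b) and (a) bound how many indices can cluster within a fixed window, so the contribution of configurations with $r\ge 3$ points in a single cluster is $O(N^{r(\delta-1/4)}\cdot N^{(2k-r)/4}\cdot \|A\|_\infty^{2k})$, which is negligible once $\delta<1/20$; condition (c) controls the "medium range" — pairs with gap exceeding $\sqrt{\ln N}$ but still contributing through the $(s_{i+1}-s_i)^{-\gamma}$ factor — since integrating the polynomial multiple mixing bound against $d\fm_N\otimes d\fm_N$ on such pairs gives exactly the integral bounded by (c), yielding an $O(\ln^{-\kappa}N)$ error per bad pair. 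The quantitative link \eqref{GammaDelta} between $\gamma$ and $\delta$ will be used precisely to ensure that the combinatorial loss from the at most $(2k)!$ choices of orderings, multiplied by the number of "chains" of medium-gaps, is dominated by the decay afforded by (c). Verifying this bookkeeping — in particular decoupling configurations with a single long chain of comparable gaps, where one cannot factor the integral into independent pairs — will be the delicate part of the argument.
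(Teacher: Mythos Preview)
Your approach is genuinely different from the paper's: you attempt a direct method-of-moments in the spirit of \cite{BG}, whereas the paper proves Proposition~\ref{PrLFPoly} by a Bernstein/Lindeberg argument on characteristic functions. The paper chops $[-N^{1/2+\delta},N^{1/2+\delta}]$ into big blocks of size $N^{\beta_1}$ separated by buffers of size $N^{\beta_2}$, and shows via the almost Markov decomposition that $\Phi_j(\xi):=\EXP_\ell(e^{i\xi S_{N,j}})$ satisfies a one-step recursion $\Phi_j=\Phi_{j-1}(1-\tfrac{\xi^2}{2}v_j)+\text{error}$. Condition~\eqref{GammaDelta} is precisely the compatibility relation that allows a simultaneous choice of $\beta_1,\beta_2$ so that (i) each block contributes $O(N^{-\delta})$ (so the Taylor expansion of $e^{i\xi T_j}$ to second order is legitimate) and (ii) the buffer is wide enough for the polynomial equidistribution error $N^{-\beta_2\gamma}$ to beat the remaining factors. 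It is not combinatorial bookkeeping as you suggest.

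There is a genuine gap in your scheme: the polynomial multiple-mixing bound you write down does \emph{not} follow from iterating \eqref{AMFlows} and \eqref{EquiWU}. The equidistribution estimate \eqref{EquiWU} carries a factor $\|A\|_{C^r}$, and for the product $\prod_{j\ge 2} B_j\circ G_{s_j-s_1}$ that $C^r$ norm is of order $K^{s_m-s_2}$ (derivatives grow exponentially under a partially hyperbolic flow). Thus to split off the first pair in a $2k$-point correlation you incur an error $K^{\,s_{2k}-s_3}\, g_1^{-\gamma}$, and the span $s_{2k}-s_3$ can be as large as $N^{1/2+\delta}$, which annihilates any polynomial gain from $g_1^{-\gamma}$ or from condition~(c). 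Your proposed scale $\sqrt{\ln N}$ is far too small a buffer to decouple clusters in this regime. The paper's block/characteristic-function route sidesteps this entirely because it never needs a higher-order factorization: within each block only $\EXP_{\ell_s}(\tT_j)$ and $\EXP_{\ell_s}(\tT_j^2)$ are required, and for these the relevant $C^r$ norm is $K^{|t_2-t_1|}\le K^{\sqrt{\ln N}}$, which is subpolynomial and harmless. If you want to rescue the moment method you would have to reintroduce a polynomial block scale and freeze blocks one at a time via the Markov property---at which point you are essentially redoing the paper's argument.
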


\begin{lemma}
\label{LmRandMes1ph}
Under the assumptions of Theorems \ref{ThCLT1Poly},
there are subsets $X_N\subset X$ such that
$\DS \lim_{N\to\infty} \mu(X_N)=1$ and for any sequence $x_N\in X_N$
the measures $\{\fm_N(x_N)\}$, defined by \eqref{DefMND1}, 
satisfy the conditions of Proposition \ref{PrLFPoly}.
\end{lemma}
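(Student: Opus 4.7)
The plan is to take $X_N$ to be the intersection of four good sets, each of measure tending to $1$, on which we respectively control (i) the variance $V_N$, (ii) the support of $\fm_N$, (iii) the worst local density of $\fm_N$, and (iv) the double integral in condition (c).

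Property (d) is automatic: by the very definition of $\cS_N = S_N/\sqrt{V_N(x)}$, one has $\nu(\cS_N^2) \equiv 1$ for every $x$. For (i), Proposition~\ref{prop:localtime} together with the fact that $\cL^2$ is non-degenerate at zero provides constants $c_1, c_2 > 0$ and a set $X_N^{(1)}$ with $\mu(X_N^{(1)}) \to 1$ on which $c_1 N^{3/2} \leq V_N(x) \leq c_2 N^{3/2}$. This immediately yields the total-mass portion of (a), since $\|\fm_N(x)\| = N/\sqrt{V_N(x)} \leq N^{1/4}/\sqrt{c_1} \leq N^{1/4+\delta}$ once $N$ is large. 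The support portion of (a), namely $\max_{n \leq N} |\tau_n(x)| \leq N^{1/2+\delta}$, is obtained on a further set $X_N^{(2)}$ of measure $\to 1$ by the same anticoncentration-based argument used in the proof of Lemma~\ref{LmRandMes1}.

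For (b), the one-dimensional analogue of Lemma~\ref{LmLocTime} (which follows directly from \eqref{eq:ACLD} by setting $c_1 = \cdots = c_p = t$) yields $\mu(\ell^p(\cdot, t, N)) \leq C_p N^{p/2}$, uniformly in $t$. Choosing $p$ sufficiently large (depending on $\delta$) and combining Markov's inequality with a union bound over integer $t \in [-N^{1/2+\delta}, N^{1/2+\delta}]$ produces a set $X_N^{(3)}$ with $\mu(X_N^{(3)}) \to 1$ on which $\ell(x, t, N) \leq N^{1/2+\delta/2}$ for all such $t$; on $X_N^{(2)}$ the local time vanishes for $|t| > N^{1/2+\delta}$, so together with the lower bound on $V_N$ we obtain $\fm_N([t, t+1]) = \ell(x, t, N)/\sqrt{V_N(x)} \leq N^{\delta/2 - 1/4}/\sqrt{c_1} \leq N^{\delta - 1/4}$, verifying (b).

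The main obstacle is (c). Writing
$$ Q_N(x) := \sum_{\substack{n_1, n_2 \leq N \\ |\tau_{n_1}(x) - \tau_{n_2}(x)| > \sqrt{\ln N}}} \frac{1}{|\tau_{n_1}(x) - \tau_{n_2}(x)|^\gamma}, $$
the left-hand side of (c) equals $Q_N(x)/V_N(x)$. Using $\tau_{n_2} - \tau_{n_1} = \tau_{n_2-n_1} \circ f^{n_1}$ together with the anticoncentration bound \eqref{eq:ACLD} yields
$$ \mu\bigl(|\tau_{n_1} - \tau_{n_2}| \in [z, z+1]\bigr) \leq \frac{K}{\sqrt{|n_2-n_1|}} \, \Theta\!\left(\frac{|z|}{\sqrt{|n_2-n_1|}}\right). $$
Integrating against $|z|^{-\gamma}$ over $|z| > \sqrt{\ln N}$, by substituting $u = z/\sqrt{|n_2-n_1|}$ and splitting the range according to whether $|n_2-n_1|$ exceeds $\ln N$ (using $\int_1^\infty \Theta(u) u\, du < \infty$ and $\gamma > 1$), gives a per-pair bound of order $|n_2-n_1|^{-1/2} (\ln N)^{-(\gamma-1)/2}$. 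Summing over $n_1, n_2 \leq N$ yields $\mu(Q_N) \leq C N^{3/2} (\ln N)^{-(\gamma-1)/2}$. By Markov's inequality, for any $\kappa' < (\gamma-1)/2$ the set $X_N^{(4)} := \{x : Q_N(x) \leq N^{3/2}(\ln N)^{-\kappa'}\}$ has measure $\to 1$, and on $X_N^{(1)} \cap X_N^{(4)}$ we have $Q_N/V_N \leq (\ln N)^{-\kappa'}/c_1 \leq (\ln N)^{-\kappa}$ for any $\kappa \in (0, \kappa')$. Taking $X_N = X_N^{(1)} \cap X_N^{(2)} \cap X_N^{(3)} \cap X_N^{(4)}$ completes the proof.
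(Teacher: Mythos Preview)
Your approach mirrors the paper's, and the verifications of (a), (b), (d), together with the expectation bound $\mu(Q_N)=O\bigl(N^{3/2}(\ln N)^{-(\gamma-1)/2}\bigr)$ for (c), are all correct. There is one slip, however: with \emph{fixed} constants $c_1,c_2>0$, Proposition~\ref{prop:localtime} only gives $\mu(X_N^{(1)})\to\Prob(c_1\le\cL^2\le c_2)<1$, not $\mu(X_N^{(1)})\to 1$. The paper repairs this by using the $N$-dependent lower bound $V_N\ge N^{3/2}(\ln N)^{-\kappa}$ (whose failure set has vanishing measure since $\cL^2>0$ a.s.), and then requiring $\cR_N\le N^{3/2}(\ln N)^{-2\kappa}$, which Markov's inequality gives once $4\kappa<\gamma-1$. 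Your argument is fixed the same way: take $c_1(N)=(\ln N)^{-\kappa''}$ for a small $\kappa''>0$; the polylog factor is absorbed by $N^{\delta/2}$ in (a) and (b), and (c) goes through for any $\kappa<\kappa'-\kappa''$.

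As a minor stylistic difference, your computation of $\mu(Q_N)$ is more direct than the paper's: you use $\tau_{n_2}-\tau_{n_1}=\tau_{n_2-n_1}\circ f^{n_1}$ and $f$-invariance of $\mu$ to reduce to the one-point anticoncentration bound, whereas the paper rewrites the double integral via local times $\ell(x,n_1,N)\,\ell(x,n_2,N)$ and invokes the two-point bound. Both routes yield the same estimate. (In either version the split according to whether $|n_2-n_1|$ exceeds $\ln N$ is unnecessary: bounding $\Theta$ by a constant and summing $|z|^{-\gamma}$ over $|z|>\sqrt{\ln N}$ already gives $C\,m^{-1/2}(\ln N)^{-(\gamma-1)/2}$.)
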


\begin{proof}
The fact that  conditions (a) and (b) hold for arbitrary $\delta>0$ are verified as before. Condition 
(d) is immediate from the definition of $\fm_N.$ In order to verify condition (c) we note that by 
Proposition \ref{prop:localtime}, for any $\kappa >0$, $\mu(x: V_N(x)<N^{3/2} \ln^{-\kappa} N)\to 0$ as $N\to\infty.$
Therefore it is enough to check that if $\delta$ and $\kappa$ are sufficiently small, then
$$
\mu(x: \cR_N(x) <N^{3/2} \ln^{-2\kappa} N)
$$
is close to 1,
where
$$ \cR_N (x):=\sum_{n_1, n_2\in \integers: |n_1-n_2|>\sqrt{\ln N}} \frac{\ell(x, n_1, N)  \ell(x, n_2, N)}
{|n_1-n_2|^\gamma} $$
and $\ell(x,t, N)$ is the local time defined by \eqref{DefLT}.
Note that 
$$ \mu\left(\ell(n_1, N)  \ell(n_2, N)\right)= 
\sum_{1 \leq j_1, j_2 \leq N} 
\mu(x: |\tau_{j_1}(x) - n_1| \leq 1, | \tau_{j_2}(x) - n_2 | \leq 1). $$


Therefore, using the anticoncentration large deviation bound, we conclude that
$$\mu(\cR_N)\leq C
\sum_{1 \leq j_1\leq j_2 \leq N} \sum_{|n_1-n_2|\geq \sqrt{\ln N}} \frac{1}{\sqrt{j_1} \sqrt{j_2-j_1+1} }
\Theta\left(\frac{n_1}{\sqrt{j_1}}\right) 
\frac{1}{|n_2-n_1|^\gamma} $$
$$
\leq C \frac{\sqrt{N}}{\ln^{(\gamma -1)/2} N } 
\sum_{j_1} \sum_{n_1} 
\frac{1}{\sqrt{ j_1}}\;
\Theta\left(\frac{n_1}{\sqrt{j_1}}\right) 
\leq C \frac{\sqrt{N}}{\ln^{ (\gamma -1)/2} N } 
\sum_{j_1} 
 \int_0^{\infty} \Theta\left(r \right) \d r 
\leq C \frac{N^{3/2}}{\ln^{(\gamma -1)/2} N } .
$$
Now the result follows by the Markov inequality provided that $ \gamma -1>4\kappa.$ 
\end{proof}

Thus in order to prove Theorem \ref{ThCLT1Poly} it suffices to establish Proposition \ref{PrLFPoly}.

\begin{proof}[Proof of Proposition \ref{PrLFPoly}.]
We divide the interval $[-N^{1/2+\delta}, N^{1/2+\delta}]$ into big blocks of size $N^{\beta_1}>0$ 
separated by small blocks of size $N^{\beta_2}$ where the parameters 
$\beta_2<\beta_1$ will be chosen later.
Let $J_j$ denote the $j$-th big block and $L$ be the union of the small blocks.
Let $\cS_N'=\int_{t\in L} A_t(G_t y) \d\fm_N(y).$ Note that $\nu(\cS'_N)=0.$ We claim 
that $\nu((\cS_N')^2)\to 0$ as $N\to\infty$ provided that
\begin{equation}
\label{Beta-1}
\beta_1>\beta_2+3\delta. 
\end{equation}
Indeed
\begin{equation}
\label{VarSmallBlocks}
 \nu\left(\left(\cS_N'\right)^2\right)
\leq \iint_{L\times L} \nu(A_{t_1} (A_{t_2}\circ G_{t_2-t_1})) \d\fm_N(t_1) \d\fm_N (t_2).
\end{equation}
According to \cite[Theorem 2]{D04}, 
$\DS \nu(A_{t_1} (A_{t_2}\circ G_{t_2-t_1}))=O\left(|t_2-t_1|^{-\gamma}\right).$ Therefore property (b) 
shows that the integral of the RHS of \eqref{VarSmallBlocks}
with respect to $t_2$ is $O\left(N^{-(1/4)+\delta} \right)$. To integrate with respect to $t_1$ we divide 
$L$ into unit intervals. Noting that the mass of each interval is $O\left(N^{\delta-(1/4)}\right)$ and the number of intervals 
is $O\left(N^{(1/2)+\delta-\beta_1+\beta_2}\right)$, we conclude that
$\DS \nu\left(\left(\cS_N'\right)^2\right)=O\left(N^{\beta_2+3\delta-\beta_1}\right)$ which tends to zero 
under \eqref{Beta-1}.

Thus the main contribution to the variance comes from the big blocks. Let
$$
\tau = N^{1/2 + \delta}, \quad
T_j=
T_j(y) =\int_{J_j} 
A_t(G_{\tau + t} y) \d\fm_N(t), \quad
S_{N,j}=\sum_{k=1}^j T_k.$$

Fix any $\xi \in \reals$. We will 
show that there is a sequence $\eps_N \to 0$, depending only 
$|\xi|$ so that for each 
$\ell\in \fM$
we have 
$$ |\EXP_\ell\left(e^{i\xi S_{N, \brN}}\right) - e^{-\xi^2/2}| \leq \eps_N $$
where $\brN$ is the number of big blocks. Let 
$\DS \Phi_j(\xi)=\EXP_\ell\left(e^{i\xi S_{N, j}}\right)$
with $\Phi_0(\xi) = 1$.

\begin{lemma} 
\label{LmOneBlock}
If 
\begin{equation}
\label{Beta-3}
\beta_1<1/4-2\delta,
\end{equation}
 then
$$ \Phi_j(\xi)=\Phi_{j-1}(\xi) \left[1-\frac{\xi^2}{2} v_j\right]+O\left(v_j N^{-\delta}+w_j
 + 
\heps_N\right), $$
where
$$ v_j\!=\!\nu(T_j^2), \quad 
w_j\!=\!\iint_{(t_1, t_2)\in J_j \atop |t_1-t_2|\geq \sqrt{\ln N}} 
\frac{\d\fm_N (t_1) \d\fm_N (t_2)}{|t_1 - t_2|^\gamma}, 
\quad \heps_N\!=\!N^{\delta-1/4-\beta_2(\gamma-1)}.$$ 
\end{lemma}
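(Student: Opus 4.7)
The plan is to run a characteristic-function argument for a CLT of weakly dependent sums, using the almost Markov decomposition \eqref{AMFlows} and the equidistribution assumption \eqref{EquiWU} to decouple the new block from the past. First I would note that hypothesis (b) combined with \eqref{Beta-3} yields the pointwise bound $\|T_j\|_\infty \leq C\fm_N(J_j) \leq C N^{\beta_1+\delta-1/4} \leq C N^{-\delta}$, so the Taylor expansion
$$
e^{i\xi T_j} = 1 + i\xi T_j - \tfrac{\xi^2}{2} T_j^2 + R_j, \qquad |R_j| \leq \tfrac{1}{6}|\xi|^3|T_j|^3 \leq C|\xi|^3 N^{-\delta} T_j^2,
$$
is effective; the cubic remainder contributes $O(N^{-\delta}\EXP_\ell(T_j^2))$, which once $\EXP_\ell(T_j^2)$ is identified with $v_j$ (up to lower order) becomes the advertised $O(v_j N^{-\delta})$ summand.

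The main work is estimating $\EXP_\ell(e^{i\xi S_{N,j-1}} T_j)$ and $\EXP_\ell(e^{i\xi S_{N,j-1}} T_j^2)$. For the linear term, expand $T_j = \int_{J_j} A_t \circ G_{\tau+t}\,\d\fm_N(t)$ and for each $t\in J_j$ change variables by $z = G_{\tau+s_{j-1}}y$: the future factor becomes $A_t\circ G_{t-s_{j-1}}$ with $t-s_{j-1} \geq \Delta = N^{\beta_2}$, while the past factor $e^{i\xi S_{N,j-1}}$ becomes a function $\tilde\phi(z)$ obtained by pulling back through $G_{-(\tau+s_{j-1})}$. On each piece of the almost Markov decomposition of $G_{(\tau+s_{j-1})*}\ell$, this pulled-back factor is nearly constant because backward flow contracts the supporting unstable leaf exponentially fast. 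The equidistribution bound \eqref{EquiWU} then yields $|\EXP_\ell(e^{i\xi S_{N,j-1}} A_t\circ G_{\tau+t})| = O((t-s_{j-1})^{-\gamma})$; integrating against $\fm_N$ (whose density is $\leq N^{\delta-1/4}$ by (b)) over $[s_{j-1}+\Delta,\infty)$ delivers exactly $O(\heps_N)$.

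For the quadratic term, expand $T_j^2$ as a double integral over $J_j \times J_j$ and split the pairs according to whether $|t_1 - t_2|$ exceeds $\sqrt{\ln N}$. The far pairs use the polynomial correlation decay $|\nu(A_{t_1} A_{t_2}\circ G_{t_2-t_1})| = O(|t_2-t_1|^{-\gamma})$ from \cite[Theorem 2]{D04}; the same conjugate-and-decouple argument upgrades this bound to the $\ell$-expectation, producing a contribution $O(w_j)$. For close pairs the decoupling of the previous paragraph is repeated with the product $A_{t_1}\circ G_{\tau+t_1} \cdot A_{t_2}\circ G_{\tau+t_2}$ in place of a single $A_t$, extracting the main term $\Phi_{j-1}(\xi)\cdot \nu(A_{t_1}(A_{t_2}\circ G_{t_2-t_1}))$ with an $O(\heps_N)$ error. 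Integrating and reinstating the far-pair piece (absorbed into $w_j$) gives $\EXP_\ell(e^{i\xi S_{N,j-1}} T_j^2) = \Phi_{j-1}(\xi) v_j + O(w_j + \heps_N)$, and assembling the linear, quadratic, and cubic contributions produces the claim.

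The hard part is the decoupling itself: the past factor $e^{i\xi S_{N,j-1}}$ is $C^0$-bounded but its Lipschitz constant on $Y$ grows exponentially in $N$ (because $S_{N,j-1}$ involves $A\circ G_s$ for $s$ as large as $N^{1/2+\delta}$), so mixing estimates cannot be applied to it directly. The resolution sketched above -- push $\ell$ forward by $G_{\tau+s_{j-1}}$ and pull $e^{i\xi S_{N,j-1}}$ back through the now-contracting backward flow on each almost-Markov piece -- converts the past factor into something essentially constant on each piece of the decomposition. The constraints \eqref{Beta-3} and \eqref{GammaDelta} enter precisely to make sure that the resulting oscillation and equidistribution errors fit into the three-term bound $v_j N^{-\delta} + w_j + \heps_N$ of the lemma.
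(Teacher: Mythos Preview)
Your proposal is correct and follows essentially the same route as the paper: almost-Markov decompose $\ell$ at a time in the small block preceding $J_j$, freeze the past factor $e^{i\xi S_{N,j-1}}$ on each unstable piece (backward contraction makes it constant up to a $\theta^{N^{\beta_2}}$ error), Taylor-expand $e^{i\xi T_j}$ to third order using $\|T_j\|_\infty=O(N^{-\delta})$, control the linear term by equidistribution of unstable leaves, and split the quadratic term into close pairs (handled by equidistribution applied to the product function $D_{t_1,t_2}=A_{t_1}(A_{t_2}\circ G_{t_2-t_1})$, whose $C^r$ norm stays $\leq N^{\delta/2}$ because $|t_1-t_2|\leq\sqrt{\ln N}$) and far pairs (giving $w_j$ via \cite[Theorem~2]{D04}). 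The only point you leave implicit is the $C^r$-norm bound on $D_{t_1,t_2}$, which is exactly why the close/far cutoff is placed at $\sqrt{\ln N}$; otherwise your outline matches the paper's proof.
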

Note that by property (b) and \eqref{Beta-3}, we have
$T_j\!\!=\!\!O(N^{-\delta}).$ 
Hence $v_j\!\!=\!\!O(N^{-2\delta})$ and so the estimate of 
Lemma~\ref{LmOneBlock} can be rewritten as
$$ \Phi_j(\xi)=\Phi_{j-1}(\xi) e^{-\xi^2 v_j/2} +O\left(v_j N^{-\delta}+w_j+
\heps_N\right) .$$
Repeating this process, we obtain
\begin{equation}
\label{CharEll}
 \EXP_\ell\left(e^{i \xi \cS_{N, \brN}}\right)=\Phi_\brN(\xi)=
\exp\left[-\frac{\xi^2}{2} \sum_{j=1}^N v_j \right]+
O\left(\sum_j \left[v_j N^{-\delta}+w_j\right]+\brN \heps_N\right). 
\end{equation}
Next we claim that
\begin{equation}
\label{VarSumU}
\sum_{j=1}^\brN v_j=1+o_{N\to\infty}(1)
\end{equation}
provided that
 \begin{equation}
 \label{Beta-2}
\beta_2(\gamma-1)>2\delta.
\end{equation}

Indeed, 
$\DS 1+o(1)\!\!=\!\!\nu(S_{N, \brN}^2)\!\!=\!\!\sum_{j=1}^\brN v_j\!+\!
\sum_{j_1\neq j_2} \nu\left(T_{j_1} T_{j_2}\right). $
The second term here is at most
$$ C \sum_{n_1, n_2: |n_1-n_2|\geq N^{\beta_2} }
\frac{\fm_N([n_1, n_1+1]) \fm_N([n_2, n_2+1])}{|n_1-n_2|^\gamma} .$$
Summing over $n_2$ using assumption (b) we are left with
$$ C \sum_{n_1} \fm_N([n_1, n_1+1]) N^{\delta-(1/4)-\beta_2(\gamma-1)}$$
$$=C\fm_N([-N^{1/2+\delta}, N^{1/2+\delta}]) N^{\delta-(1/4)-\beta_2(\gamma-1)}
\leq \brC N^{2\delta-\beta_2(\gamma-1)}=o_{N\to \infty} (1),
 $$
where in the last inequality we also used assumption (a).

\noindent 
 This proves that \eqref{Beta-2} implies \eqref{VarSumU}.
  \eqref{VarSumU} shows in particular that \\$\DS \sum_j v_j N^{-\delta}\!\!=\!\!O(N^{-\delta}). $
 Also $\DS \sum_j w_j\!=\!o(1)$ due to assumption (c) of Proposition \ref{PrLFPoly}, while
 $$\brN \heps\!\leq\!N^{1/4+2\delta-\beta_1-\beta_2(\gamma-1)}\!=\!o(1)$$
 provided that
\begin{equation}
\label{Beta-4} 
\beta_1+\beta_2(\gamma-1)> \frac14 +2\delta.
\end{equation}

 Plugging these estimates into \eqref{CharEll} we
 conclude that for all $\ell \in \fM$ we have
 \begin{equation}
 \label{CharFinal}
  \EXP_\ell\left(e^{i\xi S_{N, \brN}}\right)=e^{-\xi^2/2}+o_{N\to\infty}(1)
  \end{equation}
 if $\beta_1$ and $\beta_2$ satisfy \eqref{Beta-1}, \eqref{Beta-3}, \eqref{Beta-2}, and
 \eqref{Beta-4}. Thus we need $\beta_1$ and $\beta_2$ to satisfy
 $$ \beta_1<\frac{1}{4}-2\delta, \quad \beta_2<\beta_1-3\delta, \quad
 \beta_1+\beta_2(\gamma-1)> \frac{1}{4}+2\delta,
 \quad \beta_2(\gamma-1)>2\delta. $$
Since $\beta_1$ can be chosen arbitrary close to $\frac{1}{4}-2\delta$ and $\beta_2$
can be chosen arbitrarily close to $\beta_1-3\delta$, the above inequalities are compatible
if \eqref{GammaDelta} holds.
 It then follows that  \eqref{CharFinal} holds on the convex hull of $\fM$ 
 which includes $\nu.$ This completes the proof of Proposition \ref{PrLFPoly} modulo Lemma \ref{LmOneBlock}.
\end{proof}

\begin{proof}[Proof of Lemma \ref{LmOneBlock}]
Let $J_j=[n_j^-, n_j^+].$ Denote $m_j=\tau+\frac{n_{j-1}^++n_{j}^-}{2}.$
We use the almost Markov decomposition \eqref{AMFlows}:
$$ \EXP_\ell(A\circ G_{m_j})=\sum_s c_s \EXP_{\ell_s} (A)+O(\teps_N) $$
where $\teps_N=\theta^{N^{\beta_2}}$, $\ell_s=(D_s, \rho_s)\in \fM$ and $\DS \sum_s c_s=1-O(\teps_N).$

Fix arbitrary $y_s\in G_{ -m_j} D_s.$ Then 
$$ \EXP_\ell\left(e^{i\xi S_{N, j}}\right)=\sum_s c_s \EXP_{\ell_s} \left(e^{i\xi [S_{N, j-1} (y_s)+\tT_j(y)]} \right)+
O(\teps_N)
$$
where 
$$\tT_j(y)=T_j(G_{-m_j}y) =\int_{J_j} A_t(G_{t-m_j} y) \d\fm_N(t). $$
Next
$$ \EXP_{\ell_s} \left(e^{i\xi \tT_j}\right)=
1+i\xi \EXP(\tT_j)-\frac{\xi^2}{2}\EXP_{\ell_s} (\tT_j^2)+O\left(\EXP_{\ell_s}\left(\left|\tT_j\right|^3\right)\right).$$
Note that $\DS \EXP(\tT_j)=O\left(N^{\beta_2(\gamma-1)-(1/4)+\delta}\right)$
due to the equidistribution of unstable leaves.

To estimate $\EXP_{\ell_s} (\tT_j^2)$ we split 
$$ \EXP_{\ell_s} (\tT_j^2)=\iint_{J_j\times J_j} 
\EXP_{\ell_s} [A_{t_1} (G_{t_1-m_j} y) A_{t_2} (G_{t_2-m_j} y)  ]
\d \fm_N(t_1) \d \fm_N(t_2)=I+\RmII$$
where $I$ includes the terms where $|t_1-t_2|\leq \sqrt{\ln N}$ and $\RmII$ includes the other
terms.

According to \cite[Theorem 2]{D04}, 
$$\EXP_{\ell_s} \left((A_{t_1} \circ G_{t_1-m_j})(A_{t_2} \circ G_{t_2-m_j})\right)=
O\left(|t_2-t_1|^{-\gamma}\right). $$
Therefore
$\RmII=O(w_j).$

To estimate $I$, we note that 
$$\DS \EXP_{\ell_s} \left((A_{t_1} \circ G_{t_1-m_j})(A_{t_2} \circ G_{t_2-m_j})\right)=
\EXP_{\ell_s} \left((D_{t_1, t_2} \circ G_{t_1-m_j})\right) 
$$ where
$D_{t_1, t_2}=A_{t_1} (A_{t_2} \circ G_{t_2-t_1})$ with 
$\DS \|D_{t_1, t_2}\|_{C^r} \leq K^{t_2-t_1}\leq N^{\delta/2}.$

Hence using the equidistribution of  unstable leaves, we obtain
$$ \EXP_{\ell_s} \left((A_{t_1} \circ G_{t_1-m_j})(A_{t_2} \circ G_{t_2-m_j})\right)
=\nu(D_{t_1, t_2})+O\left(N^{\delta/2} (t_1-m_j)^{-\gamma}\right)$$$$
=\nu(A_{t_1} (A_{t_2} \circ G_{t_2-t_1}))+
O\left(N^{\delta/2}(t_1-m_j)^{-\gamma}\right).$$
It follows that
$$I=\iint_{t_1, t_2\in J_j\atop |t_1-t_2|\leq \sqrt{\ln N}} \nu(A_{t_1} (A_{t_2} G_{t_2-t_1}))
\d\fm_N(t_1) \d\fm_N(t_2)
+O\left(N^{(5\delta/2)-(1/2)-\beta_2 (\gamma-1)} \sqrt{\ln N}\right)
$$$$
=v_j+O(w_j)+O\left(N^{3\delta-(1/2)-\beta_2(\gamma-1)} \right). $$

Finally 
$$ \EXP_{\ell_s} (|\tT_j|^3)\leq N^{-\delta} \EXP_{\ell_s} (|\tT_j|^2)=
O\left(\left[v_j+w_j\right] N^{-\delta} +N^{2\delta-(1/2)-\beta_2 (\gamma-1)} \right).$$
Summing over $s$ and using the fact that
$\DS \sum_s c_s \EXP_{\ell_s}\left(e^{i\xi S_{N,{j-1}}(y_s)}\right)=\EXP_\ell\left(e^{i\xi S_{N, j-1}}\right)+O(\teps_N) $,
we obtain the result.
\end{proof}

\section*{Acknowledgement} 
C.D. was partially supported by Nankai Zhide Foundation, and AMS-Simons travel grant. 
D.D. was partially supported by NSF DMS 1956049. 
A.K. was partially supported by NSF DMS 1956310.
P.N. was partially supported by NSF DMS 2154725.

\end{document}